\newcommand\hh{\hyp^2 \times \hyp^2}  \newcommand\R{{\mathbb R}}  \newcommand\hyp{{\mathbb H}} \newcommand\C{{\mathbb C}}     \newcommand\inv{^{-1}} \newcommand\half{\frac{1}{2}}
\newtheorem{theorem}{Theorem}[section] \newtheorem{proposition}[theorem]{Proposition} \newtheorem{lemma}[theorem]{Lemma} \newtheorem{claim}[theorem]{Claim} \newtheorem{corollary}[theorem]{Corollary} \newtheorem{defn}[theorem]{Definition}    
\DeclareMathOperator{\Isom}{Isom}
\begin{document} \title{Quasi-Isometric Embeddings of Symmetric Spaces} \author{ David Fisher and Kevin Whyte}\thanks{First author partially supported by NSF grant DMS-1308291. Second author partially supported by NSF Grant DMS-1007236. The authors would also like to thank the FIM at ETHZ for hospitality and support at several points during the development of these ideas.}

\begin{abstract}
This paper opens the study of quasi-isometric embeddings of symmetric spaces.  The main focus is on the
case of equal and higher rank.  In this context some expected rigidity survives, but some surprising examples also exist. In particular there exist quasi-isometric embeddings between spaces $X$ and $Y$ where there is no isometric embedding of $X$ in $Y$.  A key ingredient in our proofs of rigidity results is a direct generalization of the Mostow-Morse Lemma in higher rank.  Typically this lemma is replaced by the {\em quasi-flat} theorem which says that maximal quasi-flat is within bounded distance of a finite union of flats.  We improve this by showing that the {\em quasi-flat} is in fact flat off of a subset of codimension $2$.
\end{abstract}

\maketitle

\section{Introduction}

The rigidity theorems of Mostow and Margulis are among the most celebrated results about the intersection of discrete groups and geometry.     With the rise of Gromov's program for the geometric study of discrete groups, coarse analogues of these results were among the most desired results \cite{Gromov}.     There are many possible translations of these theorems to a coarse setting, and so results and questions in this direction  (see \cite{Farb} for a good survey). We first recall two basic definitions:

\begin{defn} \label{defn:qi} Let $(X,d_X)$ and $(Y,d_Y)$ be metric spaces. Given real numbers $K{\geq}1$ and $C{\geq}0$,a map $f:X{\rightarrow}Y$ is called a {\em $(K,C)$-quasi-isometry} if \begin{enumerate} \item $\frac{1}{K}d_X(x_1,x_2)-C{\leq}d_Y(f(x_1),f(x_2)){\leq}K d_X(x_1,x_2)+C$ for all $x_1$ and $x_2$ in $X$, and, \item the $C$ neighborhood of $f(X)$ is all of $Y$. \end{enumerate} \noindent If $f$ satisfies $(1)$ but not $(2)$, then $f$ is called a {\em $(K,C)$-quasi-isometric embedding}. \end{defn}

\noindent{\bf Remark:} Throughout this paper, all symmetric spaces will have no compact or Euclidean factors.

To quickly summarize the current state of knowledge, good analogues of Mostow rigidity are now known in the coarse setting.    In particular, from the work of many people, one can deduce:

\begin{theorem} Two symmetric spaces are quasi-isometric if and only if they are isometric (after a possible rescaling).   Further, any finitely generated group quasi-isometric to a symmetric space  is virtually a cocompact lattice in its isometry group. \end{theorem}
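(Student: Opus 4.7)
The plan is to invoke the by-now-standard quasi-isometric rigidity program for symmetric spaces, handled by different techniques in rank one and higher rank, and then deduce the statement about finitely generated groups by a quasi-action straightening argument.

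For the first statement, I split on rank. In rank one, any quasi-isometry $f \colon X \to Y$ of negatively curved symmetric spaces extends to a quasi-symmetric homeomorphism of the boundary spheres $\partial X \to \partial Y$, equipped with their Carath\'eodory--Carnot structures. For real hyperbolic $n$-space with $n \geq 3$, Tukia's theorem promotes such a map to a conformal diffeomorphism of $S^{n-1}$, hence to an isometry of $\hyp^n$; for the complex, quaternionic, and Cayley hyperbolic cases one uses Pansu's differentiation theorem (with refinements by Chow and others) to show that the boundary map is almost everywhere a graded isomorphism of the appropriate Carnot group, which forces $X$ and $Y$ to be isometric up to scale. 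In higher rank I would follow Kleiner--Leeb (or Eskin--Farb): a quasi-isometry induces a bi-Lipschitz map of asymptotic cones, each of which is a thick Euclidean building whose combinatorial structure, together with the induced flat-preserving map, recovers the symmetric space up to rescaling on each irreducible factor. That no QIs occur between spaces of different geometric type follows from QI-invariance of rank, volume growth, and number of ends.

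For the second statement, let $\Gamma$ be finitely generated and quasi-isometric to $X$; fix a QI $f \colon \Gamma \to X$ with coarse inverse $\bar f$. Conjugation of the left-regular action of $\Gamma$ on itself by $f$ produces a cobounded, uniformly $(K,C)$-quasi-action of $\Gamma$ on $X$. In higher rank, the rigidity above places every such self-quasi-isometry within bounded distance of a unique isometry, so the quasi-action descends to a homomorphism $\rho \colon \Gamma \to \Isom(X)$. In rank one, one instead pushes the quasi-action to a uniformly quasi-symmetric action of $\Gamma$ on $\partial X$ and invokes Sullivan's theorem (or its Chow/Pansu analogues) to conjugate it to a conformal, hence isometric, action on $X$. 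In both cases, finiteness of point stabilizers in $\Gamma$'s action on itself forces $\ker \rho$ to be finite and $\rho(\Gamma)$ discrete, while coboundedness of the quasi-action delivers cocompactness. Thus $\Gamma$ is virtually a cocompact lattice in $\Isom(X)$.

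The main obstacle is the symmetric-space rigidity itself, above all in higher rank, where identifying the asymptotic cone as a thick Euclidean building and extracting enough combinatorial structure from a bi-Lipschitz map of cones to reconstruct $X$ is delicate. By contrast, the passage from QI rigidity of $X$ to the virtual-lattice statement for $\Gamma$ is essentially formal once the appropriate interior or boundary straightening is in hand.
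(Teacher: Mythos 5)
This theorem is stated in the paper as background --- a summary of known results, with the reader referred to \cite{Farb} for the history --- and the paper supplies no proof of it. Your outline is the standard program from the literature (Pansu/Tukia/Sullivan/Chow on the rank one side, Kleiner--Leeb \cite{KL} and Eskin--Farb \cite{EF} on the higher rank side, plus quasi-action straightening for the group statement), so in that sense there is nothing to compare it against; the question is only whether your sketch is an accurate account of that program. Architecturally it is, but two steps in the second half are glossed in a way that hides real difficulties.

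First, your rank one treatment of the group statement does not cover $X=\hyp^2$. There the boundary is $S^1$, and a cobounded proper quasi-action gives a uniformly quasi-symmetric (convergence) action on the circle; promoting this to a Fuchsian action is the Tukia--Gabai--Casson--Jungreis convergence group theorem, not an instance of Sullivan's or Tukia's quasiconformal straightening. Even for $\hyp^n$ with $n\geq 4$, Tukia's theorem that uniformly quasiconformal groups are conjugate to conformal ones is false in general (Tukia's own counterexamples); one needs the extra dynamical input coming from coboundedness and properness of the quasi-action. Second, your ``higher rank'' branch asserts that rigidity places every self-quasi-isometry of $X$ within bounded distance of an isometry. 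That is true only for irreducible higher rank spaces (and $H\hyp^n$, $Ca\hyp^2$), exactly as in the paper's Theorem 1.2; it fails for reducible spaces such as $\hyp^2\times\hyp^2$, which are higher rank but admit wild self-quasi-isometries (products of arbitrary quasi-isometries of the factors). For a group quasi-isometric to such a product one must first invoke the Kleiner--Leeb/Eskin--Farb factor-preservation theorem, then run the rank one boundary arguments factor by factor, and then still argue that the resulting homomorphism into the product of isometry groups has discrete cocompact image --- this last point is not formal and is where, historically, most of the remaining work lies. As written, your case division (``rank one'' versus ``higher rank'') silently drops these reducible cases, which the theorem as stated includes.
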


See \cite{Farb} for a detailed discussion and attribution. This theorem is not quite as strong as one would like to parallel Mostow rigidity - in particular, self quasi-isometries of symmetric spaces or quasi-isometries between cocompact lattices in the same semi-simple Lie group can be quite wild, while Mostow's result says group isomorphisms are induced by isometries.     That is simply the truth for $\hyp^n$ and $\C \hyp^n$.   For the other irreducible symmetric spaces, by results of Kleiner-Leeb and Pansu one has more \cite{KL,P}:

\begin{theorem}  Let $X$ be an irreducible symmetric space of higher rank, or quaternionic or Cayley hyperbolic spaces.    Every quasi-isometry of $X$ is at bounded distance from an isometry. \end{theorem}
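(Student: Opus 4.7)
The proof naturally splits along the dichotomy in the hypothesis. For the rank one cases (quaternionic hyperbolic space and the Cayley hyperbolic plane), I would follow Pansu's approach via the boundary at infinity. A quasi-isometry $f : X \rightarrow X$ extends to a quasi-symmetric homeomorphism $\partial f$ of the ideal boundary $\partial X$, which carries a canonical Carnot--Carath\'eodory structure coming from a graded nilpotent Lie group $N$. The heart of the argument is Pansu's differentiation theorem: almost every point of $\partial X$ admits a Pansu derivative that is a graded homomorphism of $N$, and a chain-rule/rigidity calculation forces $\partial f$ to be a Carnot map. For the quaternionic and octonionic $N$, the graded automorphism group is small enough that the group of such Carnot quasi-conformal maps of $\partial X$ coincides with $\Isom(X)$ acting at infinity. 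Once $\partial f$ is shown to equal the boundary extension of some $g \in \Isom(X)$, a standard argument using the Morse lemma and the action on geodesics gives that $f$ is a bounded distance from $g$.

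For the higher rank case, I would follow the Kleiner--Leeb (or Eskin--Farb) strategy and study the induced action on maximal flats. The first and most delicate step is the quasi-flat theorem: the image of any maximal flat $F \subset X$ under $f$ lies in a bounded Hausdorff neighborhood of a single flat. This can be proved either by analyzing the asymptotic cone, which is a thick Euclidean building whose top-dimensional flats are rigid, or by a coarse differentiation argument along $F$ directly in $X$. Once every maximal flat is sent near a flat, the pattern of flats and Weyl chambers at infinity endows the Tits boundary $\partial_T X$ with a combinatorial isomorphism induced by $f$. By Tits' fundamental theorem classifying isomorphisms of irreducible thick spherical buildings of rank $\geq 2$, this isomorphism is induced by an element of $\Isom(X)$, and one concludes as before that $f$ is at bounded distance from that isometry.

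The principal obstacle in both cases is the rigidity of quasi-flats in higher rank, or equivalently the rigidity of the boundary Carnot--Carath\'eodory structure in rank one. These are precisely the steps that exploit the specific geometry of $X$ and genuinely fail for $\hyp^n$ and $\C\hyp^n$, where the boundaries admit a huge group of (Carnot) quasi-conformal self-maps not induced by isometries, explaining the exceptions in the statement. In writing the argument I would treat Pansu's theorem and the quasi-flat theorem as the two central black boxes, devoting the body of the proof to explaining how each of them implies the isometry approximation for $f$.
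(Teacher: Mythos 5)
The paper offers no proof of this statement: it is quoted as background from the literature, the higher-rank case being due to Kleiner--Leeb and Eskin--Farb (cited here as \cite{KL} and \cite{EF}) and the quaternionic/Cayley cases to Pansu, with the reader referred to \cite{Farb} for history. Your sketch is a faithful outline of how those proofs actually go, and it correctly isolates the two essential inputs --- Pansu differentiation on the Carnot--Carath\'eodory boundary in rank one, and quasi-flat rigidity followed by Tits' classification of automorphisms of irreducible thick spherical buildings in higher rank --- as well as the reason $\hyp^n$ and $\C \hyp^n$ must be excluded. Two caveats. First, as written this is a reading guide rather than a proof: the two ``black boxes'' you name carry essentially all of the content of the theorem. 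Second, be precise about the quasi-flat step: what \cite{KL} and \cite{EF} prove directly is that the image of a maximal flat is Hausdorff-close to a finite union of flats (or Weyl cones); upgrading this to ``close to a single flat'' genuinely uses the surjectivity of a quasi-isometry (or else pattern-rigidity hypotheses as in Theorem \ref{rigidityThm}). The $AN$-maps constructed in Section \ref{ANmaps} of this paper are quasi-isometric embeddings that send certain flats to quasi-flats not near any single flat, so that step fails for embeddings, which is exactly the phenomenon this paper is built around.
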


This is a very satisfying analogue of Mostow's rigidity results can be used to give quick proofs of Mostow's results for cocompact lattices (coarse analogues are also known for lattices which are not cocompact, but those are generally more difficult, see \cite{E}).     Margulis' super-rigidity results allow for distinct domain and range and homomorphisms rather than isomorphisms, and finding analogues of those results is an important problem in geometric group theory.     The most obvious geometric question along these lines is to ask whether quasi-isometric embeddings of one symmetric space in another must be at bounded distance of the inclusion of a totally geodesic symmetric subspace, a question raised in e.g. in \cite{Farb}.

In this paper we make the first significant progress on this question.    Perhaps the most surprising piece of the puzzle is that exotic embeddings exist even assuming the domain and range of have equal rank and rank at least $2$.

\begin{theorem}\label{firstexotic} For any $r>1$ there are quasi-isometric embeddings of $SL_{r+1}(\R)/O(r+1)$ into $Sp_{2r}(\R)/U(2r)$. \end{theorem}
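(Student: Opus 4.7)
The strategy is to build $F : X_1 \to X_2$ explicitly using the Iwasawa $AN$ models of both spaces, with $F$ linear along each root-space coordinate but \emph{not} a Lie group homomorphism (which would force a totally geodesic image), and then to verify the quasi-isometric embedding property via horocyclic distance estimates.

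Via Iwasawa, identify $X_1 = SL_{r+1}(\R)/O(r+1)$ with the solvable group $S_1 = A_1 N_1$ of upper-triangular real matrices with positive diagonal and determinant one, and $X_2 = Sp_{2r}(\R)/U(2r)$ with the analogous Iwasawa solvable group $S_2 = A_2 N_2$; each carries a left-invariant Riemannian metric inducing the symmetric space metric. The Cartans $A_i \cong \R^r$ have equal dimension, while the nilpotent factors $N_i$ are graded by the positive roots $\Phi_i^+$, of type $A_r$ for $S_1$ and of type $C_r$ for $S_2$. The key combinatorial input is that for $r \geq 2$ one has $|\Phi_1^+| = \binom{r+1}{2} < r^2 = |\Phi_2^+|$, leaving room to choose an injection $j : \Phi_1^+ \hookrightarrow \Phi_2^+$ respecting the height filtration (sending bracket relations to bracket relations), together with a linear map $\iota : A_1 \to A_2$ such that $\iota^* j(\alpha) = \alpha$ for every $\alpha \in \Phi_1^+$. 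Once $j$ and $\iota$ are chosen, define $\phi : N_1 \to N_2$ by embedding each one-dimensional root space of $N_1$ linearly and isometrically into its target root space in $N_2$, and extending to all of $N_1$ via a fixed total ordering on positive roots. Set $F(an) := \iota(a)\phi(n)$.

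The Lipschitz bound on $F$ is immediate from the construction, since on each root-direction one-parameter subgroup and on $A_1$ the map is linear, and left-invariance handles the rest. The main obstacle is the lower bound on $d_{X_2}(F(x), F(y))$. What rescues this is that $(\iota, \phi)$ intertwines the $A_1$-action on the root-space grading of $N_1$ with the $A_2$-action on the image root spaces in $N_2$, so horospheres in $S_1$ map into horospheres in $S_2$ in a scale-preserving way. Distance in a left-invariant Iwasawa solvable group is controlled, up to bounded multiplicative and additive error, by horocyclic coordinates: an $A$-Busemann term plus a Carnot-type weighted word-length on $N$, with weights determined by the $A$-action on each root space. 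Because $F$ preserves this graded scaling, the weighted word-lengths in $S_1$ are comparable to those of their $F$-images in $S_2$, yielding the required lower bound. The technical heart of the proof is therefore this horocyclic distance comparison, specializing to $F$ the general techniques for the geometry of Iwasawa-type solvable Lie groups.
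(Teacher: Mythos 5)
There is a genuine gap, and it starts with a false premise. You assert that a Lie group homomorphism ``would force a totally geodesic image'' and therefore build $F$ as a non-homomorphism. This is a misconception: a homomorphism of the Iwasawa solvable groups $A_1N_1 \to A_2N_2$ need not extend to a homomorphism of the semisimple groups, and its image need not be totally geodesic in the symmetric space. The paper's construction is precisely an injective homomorphism of solvable Lie groups (an $A$-equivariant embedding $N_1 \hookrightarrow N_2$ together with a compatible linear map on the Cartans), and the homomorphism property is what makes the lower distance bound provable: one reduces to distances from the identity, splits $\|an\|$ into $\|a\|+\|n\|$ up to bilipschitz error, and then uses that a closed subgroup embedding of simply connected nilpotent groups distorts norms at most polynomially, which after the exponential distortion of $N$ in $AN$ unravels to a linear lower bound. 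By discarding the homomorphism property you lose all three of these steps: the Lipschitz upper bound is no longer ``immediate'' (for a homomorphism it follows from boundedness on a generating neighborhood; for your $\phi$ defined by ``extending via a fixed total ordering'' it does not), the reduction to the identity fails, and the claimed comparability of ``weighted word-lengths'' is exactly the distortion estimate you would need to prove and do not.

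The second gap is combinatorial. You justify the existence of the injection $j:\Phi_1^+\hookrightarrow\Phi_2^+$ and the linear map $\iota$ by counting ($\binom{r+1}{2} < r^2$), but counting gives no such map. What is needed is a linear isomorphism $T$ of the Cartan carrying the positive roots of $A_r$ to positive roots of $C_r$ so that two roots sum to a root \emph{if and only if} their images do (the ``only if'' direction is what forces commuting root spaces to have commuting images, and it is essential for $\phi$ to be a homomorphism, or in your setup for the bracket structure to be respected at all). This is the actual content of the theorem: the paper exhibits $T(x_0,\dots,x_r)=\tfrac12\bigl(2x_1-(x_0+x_r),\dots,2x_{r-1}-(x_0+x_r),\,x_r-x_0\bigr)$, sending $x_i-x_j\mapsto y_i-y_j$, $x_i-x_0\mapsto y_i+y_r$, $x_r-x_0\mapsto 2y_r$, and one must check the sum condition. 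Such maps do not always exist when the count permits (e.g.\ there is no such map into the split $B_r$), so the existence must be exhibited, not inferred.
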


\noindent
The theorem gives quasi-isometric embeddings between spaces of rank $r$ when there are no isometric embeddings in these cases, so the embeddings are definitely exotic.    See section \ref{ANmaps} for a more detailed discussion of the behavior of these embeddings.   Our construction is more general, and produces quasi-isometric embeddings in most of the cases where our rigidity results fail. The heart of the construction is

\begin{theorem}\label{ANisQI}  Let $G_1$ and $G_2$ be semi-simple Lie groups of equal real rank with Iwasawa decompositions $G_i = K_i A_i N_i$.   Every injective homomorphim of the solvable Lie group $A_1N_1$  as a subgroup of $A_2 N_2$ is a quasi-isometric embedding. \end{theorem}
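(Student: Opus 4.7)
The Lipschitz upper bound is automatic: by left-invariance of the Riemannian metrics and the fact that $\phi$ is a group homomorphism, $\|d\phi_g\|$ is constant in $g$ and equals $\|d\phi_e\|$, so $\phi$ is globally Lipschitz. The content of the theorem is the lower bound. The plan begins with an algebraic observation. Writing $\psi = d\phi_e$, I would show $\psi(\mathfrak{n}_1) \subseteq \mathfrak{n}_2$: this follows from $\mathfrak{n}_i = [\mathfrak{a}_i + \mathfrak{n}_i, \mathfrak{a}_i + \mathfrak{n}_i]$ (since $[\mathfrak{a}_i, \mathfrak{g}_\alpha] = \mathfrak{g}_\alpha$ for every positive root $\alpha$). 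Consequently $\phi(N_1) \subseteq N_2$, and $\phi$ descends to a Lie group homomorphism $\bar\phi \colon A_1 \to A_2$, which is injective: if $\psi(H) \in \mathfrak{n}_2$ for some $H \in \mathfrak{a}_1$, then $\psi$ intertwines the diagonalizable $\operatorname{ad}_H$ with the nilpotent $\operatorname{ad}_{\psi(H)}$, forcing $\operatorname{ad}_H = 0$ and hence $H = 0$ by semisimplicity of $G_1$.

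The geometric engine is the coarse distance formula in $AN$: for $g = \exp(H)\exp(X)$ with $H \in \mathfrak{a}$ and $X \in \mathfrak{n}$,
\[ d_{AN}(e, g) \asymp \|H\|_{\mathfrak{a}} + \log\bigl(1 + \|X\|_{\mathfrak{n}}\bigr), \]
with $\|\cdot\|_{\mathfrak{n}}$ any Euclidean norm on $\mathfrak{n}$. The upper bound comes from the classical horospherical ``go up in $A$, slide across in $N$, come back down'' trick; the $\|H\|$-lower bound comes from Busemann functions on $G/K$ (which are $1$-Lipschitz and linear in $H$ when restricted to $AN$), and the $\log\|X\|$-lower bound expresses the exponential distortion of horospheres in $G/K$.

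With this in hand, two separate lower bounds on $d_{A_2N_2}(e, \phi(g))$ emerge for $g = \exp(H)\exp(X)$. Since $\phi(g) = \exp(\psi H)\exp(\psi X)$ with $\exp(\psi X) \in N_2$, the $A_2$-factor of $\phi(g)$ in the Iwasawa decomposition of $A_2N_2$ is $\exp(\bar\phi H)$; Busemann functions in $G_2/K_2$ pulled back via $\phi$ detect this and give
\[ d_{A_2N_2}(e, \phi(g)) \gtrsim \|\bar\phi H\| \gtrsim \|H\| \]
by injectivity of $\bar\phi$ as a linear map of Euclidean spaces. Separately, the triangle inequality against $\phi(\exp X) = \exp(\psi X) \in N_2$, together with the Lipschitz bound $d_{A_2N_2}(e, \phi(\exp H)) \le L\|H\|$, yields
\[ d_{A_2N_2}(e, \phi(g)) \ge d_{A_2N_2}(e, \exp(\psi X)) - L\|H\| \gtrsim \log(1 + \|X\|) - L\|H\|, \]
using the distance formula on pure $N_2$ elements and the bi-Lipschitzness of the injective linear map $\psi|_{\mathfrak{n}_1}$. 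A standard interpolation between these two bounds (splitting into cases according to whether $L\|H\|$ dominates the log term) gives
\[ d_{A_2N_2}(e, \phi(g)) \gtrsim \|H\| + \log(1 + \|X\|) \asymp d_{A_1N_1}(e, g), \]
which is the required quasi-isometric lower bound.

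The main obstacle I anticipate is verifying the coarse distance formula in full higher-rank generality: with $\mathfrak{a}$ multi-dimensional and $\mathfrak{n}$ decomposing into root spaces of varying $\mathfrak{a}$-weights, both the horospherical upper-bound trick and the $\log\|X\|$ lower bound demand uniform control across horospherical directions, though both should follow from the root-space decomposition and standard structure theory of the symmetric space $G/K$.
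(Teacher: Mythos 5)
Your proposal is correct and follows essentially the same route as the paper: reduce to distances from the identity, use the coarse distance formula $d_{AN}(e,\exp(H)\exp(X)) \asymp \|H\| + \log(1+\|X\|)$ (undistorted $A$-part, exponentially distorted $N$-part), and let the logarithm absorb the polynomial/linear discrepancy between the two nilpotent metrics so that only the $A$-direction needs a genuinely linear comparison. Two minor points in your favor: you make explicit the reduction from an arbitrary subgroup embedding to one respecting the $A$/$N$ splitting (via $\mathfrak{n}_i=[\mathfrak{a}_i+\mathfrak{n}_i,\mathfrak{a}_i+\mathfrak{n}_i]$ and the diagonalizable-versus-nilpotent argument for injectivity on $\mathfrak{a}_1$), which the paper's proof of its Proposition leaves implicit, and your comparison of the $N$-parts via Euclidean norms on the Lie algebras is slightly cleaner than the paper's polynomial comparison of the intrinsic nilpotent group metrics.
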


\noindent This is relevant to the above as every symmetric space $K \backslash G$ is isometric to the solvable Lie group $AN$ coming from the Iwasawa decomposition $G=KAN$. We refer to quasi-isometries constructed using Iwasawa decompositions and homomorhisms as in Theorem \ref{ANisQI} as $AN$-maps.  Isometric embeddings, arising from homomorphisms from $G_1$ into $G_2$ are trivially also $AN$-maps.   As another application of Theorem \ref{ANisQI}, we have:

\begin{theorem}\label{theoremhh}  There is a quasi-isometric embedding of $\hh$ in $SL(3,\mathbb R)/O(3)$.  More generally there is a quasi-isometric embedding of the product of $n$ copies of $\hyp^2$ into $SL(n+1, \mathbb R)/O(n+1)$. \end{theorem}

\noindent It is not hard to adapt the proof of Theorem \ref{theoremhh} to produce quasi-isometric embeddings of $\prod^k_{i=1} SL(n_i+1, \R)$ into $SL(n+1,\R)$ whenever $\sum^k_{i=1} n_i =n$.  We make no attempt to be complete in our discussion of quasi-isometric embeddings in the reducible case, focusing instead on a particular construction that shows a somewhat more dramatic failure of rigidity.  By combining Theorems \ref{theoremhh} and \ref{firstexotic} with a simple {\em twist} described in Section \ref{ANmaps} we prove the following theorem:

\begin{theorem}\label{mostexotic} There exists a quasi-isometric embeddings of the product of $\hh$ into $SP(4,\R)/U(2)$ that is not at bounded distance from an $AN$-map. \end{theorem}

\noindent The behavior of these maps on flats is particularly surprising and we describe it in detail in Section \ref{ANmaps}. It is probably true that the same type of argument yields quasi-isometries that are not $AN$ maps from $(\hyp^2)^r$ to $Sp(4r,\R)/U(2r)$ but proving this by our method would be involve some quite complicated combinatorics.  While this paper was being revised, Nguyen produced examples of quasi-isometric embeddings of $(\hyp^2)^n$ into $SL(n+1, \mathbb C)$ where every flat maps over more than a single flat, and so the map is transparently not at bounded distance from an $AN$ map \cite{Nguyen}.

%Using Corollary \ref{corollary:classification}, we can rule out mimicking these constructions with irreducible symmetric %spaces of rank at least $2$.

In the other direction, despite the constructions in Theorems \ref{firstexotic}, \ref{ANisQI}, \ref{theoremhh} and \ref{mostexotic}, there is a substantial amount of rigidity for quasi-isometric embeddings in equal and higher rank.  We prove that quite often one has only isometric embeddings.    The key issue seems to be linear embeddings of the patterns of hyperplanes in the restricted root system.  We refer to this pattern of hyperplanes as the {\em Weyl pattern} of the symmetric space.  The reason to believe that rigidity should occur in equal and higher rank is the {\em quasi-flat theorem} of Eskin-Farb and Kleiner-Leeb which shows that the image of a flat in this setting is within a bounded distance of a finite union of flats.  To prove our rigidity results, we require a substantial improvement to this statement that we describe immediately after stating our main rigidity result.

\begin{theorem}\label{rigidityThm} Let $X_1$ and $X_2$ be irreducible symmetric spaces or Euclidean buildings, both of rank $r>1$.   Let $C_1$ and $C_2$ be their Weyl patterns on $\R^r$.  Then: \begin{enumerate}

\item If there are no elements of $GL_r(\R)$ embedding $C_1$ into $C_2$ then there are no quasi-isometric embeddings
    of $X_1$ into $X_2$.

\item Fix $K>1$.  If all embeddings of $C_1$ into $C_2$ which are $K$-quasi-conformal are conformal then all
    $(K,C)$-quasi-isometric embeddings of $X_1$ into $X_2$ are at bounded distance from a totally geodesic
    embedding.    In particular, if all pattern embeddings are conformal then all quasi-isometric embeddings are at
    bounded distance from totally geodesic.

\end{enumerate} \end{theorem}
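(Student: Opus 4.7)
The plan is to pass to asymptotic cones and exploit the fact that in higher rank these cones are Euclidean buildings whose apartment structure is exactly the Weyl pattern. Given an $(L,A)$-quasi-isometric embedding $f: X_1 \to X_2$ and a non-principal ultrafilter, one obtains a bi-Lipschitz embedding $f_\omega: X_1^\omega \to X_2^\omega$ between asymptotic cones; when $X_i$ is a building, $X_i^\omega$ is again a building of the same type (a rescaled one), and when $X_i$ is a symmetric space of rank $r>1$ irreducible, $X_i^\omega$ is a non-discrete Euclidean building in the sense of Kleiner--Leeb, whose apartments are copies of $\R^r$ cut up by the hyperplane arrangement $C_i$.

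The first key step is to show that apartments map into apartments. A maximal flat $F \subset X_1^\omega$ is sent by $f_\omega$ to a bi-Lipschitz copy of $\R^r$ in $X_2^\omega$, that is, a top-dimensional quasi-flat in a rank-$r$ Euclidean building. By the Kleiner--Leeb quasi-flat theorem (as extended to the ultralimit/building setting and used in the work of Eskin--Farb), such a quasi-flat lies in a finite union of Weyl chambers in apartments; in the ultralimit one can bootstrap this to say that $f_\omega(F)$ lies in a single apartment $A_2 \subset X_2^\omega$. This produces, for each maximal flat $A_1$ in $X_1^\omega$, a bi-Lipschitz embedding $\phi: A_1 \to A_2$ of flats, with bi-Lipschitz constant $L$.

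The second step is that $\phi$ must send the hyperplane arrangement $C_1$ on $A_1$ into the arrangement $C_2$ on $A_2$. The walls of $C_i$ inside an apartment of $X_i^\omega$ are exactly the points where the apartment branches, that is, where the local tangent cone fails to be a half-space and instead sees additional apartments. This branching structure is preserved by any bi-Lipschitz homeomorphism of buildings, so $\phi(C_1) \subset C_2$ as a set. Applying Rademacher's theorem, at almost every point $x \in A_1$ the map $\phi$ is differentiable with invertible derivative $D\phi(x) \in GL_r(\R)$, and by choosing $x$ at a Lebesgue density point of every chamber meeting $x$, one sees that $D\phi(x)$ itself must map $C_1$ linearly into $C_2$. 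Part (1) follows immediately: if no element of $GL_r(\R)$ embeds $C_1$ into $C_2$, then no such $\phi$ exists, so no quasi-isometric embedding $f$ can exist.

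For part (2), observe that since $\phi$ is $L$-bi-Lipschitz the linear map $D\phi(x)$ has ratio of largest to smallest singular value at most $L^2$; taking $L$ close to $K$ by suitable rescaling of the cone (or more carefully, exploiting that $(K,C)$-quasi-isometric embeddings induce exactly $K$-bi-Lipschitz maps of cones) one concludes that $D\phi(x)$ is a $K$-quasi-conformal element of $GL_r(\R)$ embedding $C_1$ into $C_2$. By hypothesis every such map is conformal, hence $D\phi$ is conformal almost everywhere on $A_1$; since $r \geq 2$, a bi-Lipschitz map of $\R^r$ with conformal derivative a.e.\ is a similarity. Thus on every apartment $\phi$ is an affine similarity sending $C_1$ to a rescaled copy of itself embedded in $C_2$. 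Assembling apartments, $f_\omega$ is a homothetic, pattern-respecting embedding of buildings, which in particular comes from a totally geodesic embedding of symmetric spaces. The final step is the standard asymptotic-cone to bounded-distance bootstrap: the space of totally geodesic embeddings is rigid (a finite-dimensional orbit under $\mathrm{Isom}(X_2)$), so the fact that every ultralimit of $f$ agrees with such an embedding forces $f$ itself to lie within bounded distance of one.

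The main obstacle is the interface between the two halves of the argument: first, the quasi-flat theorem in the ultralimit, which requires careful control of top-dimensional quasi-flats in non-discrete buildings (essentially Kleiner--Leeb, but one must verify the statement applies uniformly to extract a single limiting apartment rather than a polyhedral union), and second, the final bootstrap from asymptotic-cone rigidity to bounded distance from a totally geodesic embedding, where one must rule out that different apartments in $X_1$ could limit to totally geodesic embeddings that are inconsistent with each other. Both pieces are where the higher-rank structure is genuinely used.
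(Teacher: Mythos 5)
Your overall strategy---pass to asymptotic cones, analyze the resulting bilipschitz embedding of Euclidean buildings via the quasi-flat theorem, apply Rademacher to get a linear pattern embedding as the a.e.\ derivative, and use the conformality hypothesis to upgrade to similarities on flats---is the same as the paper's. But there are two concrete gaps.

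First, you assert that the image $f_\omega(F)$ of a maximal flat can be ``bootstrapped'' into a single apartment of $X_2^\omega$ before any use of the pattern hypothesis. This is false: the paper's own $AN$-map examples produce coherent families of quasi-flats (e.g.\ images of flats in $SL_3(\R)$ crossing ten or twelve chambers of $Sp_4$) that are \emph{not} within bounded distance of any single flat, and in the cone these become bilipschitz flats not contained in one apartment. What is actually true, and what the paper proves (Proposition \ref{quasiflats}), is that off a uniformly finite set of codimension-two subflats the image is \emph{locally} contained in a flat; that local statement suffices for Rademacher and hence for part (1), and the single-apartment conclusion is only recovered \emph{after} invoking the hypothesis that $K$-quasi-conformal pattern embeddings are conformal (Proposition \ref{flatstoflats}). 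Relatedly, your step ``a bilipschitz map of $\R^r$ with conformal derivative a.e.\ is a similarity'' needs the pattern constraint and the connectivity of the complement of the codimension-two exceptional set: the paper uses Gehring's theorem to get smoothness, then the fact that the derivative lies in a finite set of isometric pattern embeddings up to scalar, and then connectedness off $\Sigma$ to force constancy.

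Second, and more seriously, your final step---``every ultralimit of $f$ agrees with a totally geodesic embedding, so $f$ is within bounded distance of one''---does not follow. Asymptotic cones only detect sublinear deviations, so cone information alone can never yield a bounded-distance conclusion. The paper closes this gap with a specific chain of results: \cite[Lemma 7.1.1]{KL} converts ``flats go to single flats in all cones'' into ``flats go to within uniformly bounded distance of flats''; the affine rigidity of coarse hyperplane foliations \cite[Lemma 7.1.12]{MSW2} then puts the map at bounded distance from a pattern-preserving affine map on each flat; the induced boundary map is shown to land in a sub-building, identified with the boundary of a totally geodesic $Y'\subset Y$ by \cite[Theorem 3.1]{KL2}; and finally quasi-isometric rigidity for the resulting quasi-isometry $X\to Y'$ (\cite{EF}, \cite{KL}) gives the isometry at bounded distance. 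You correctly flag this bootstrap as the main obstacle, but it is precisely the part that requires an argument rather than an appeal to rigidity of the space of totally geodesic embeddings.
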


{\noindent {\bf Remark:} While part $(1)$ of the conclusion of Theorem \ref{rigidityThm} follows easily for reducible symmetric spaces and buildings, the assumption of part $(2)$ never holds in the reducible case, not even in the context of Corollary \ref{local}.  Results in the reducible case have recently been obtained by Nguyen \cite{Nguyen}.}

To explain the key technical mechanism in the proof, our version of the Mostow-Morse Lemma, we pass to asymptotic cones and bilipschitz maps.
We do so simply to have cleaner statements of results.  We are now consider a building $Y$ of rank $k$ and in this context Kleiner and Leeb showed that any bilipschitz map $f \R^k \rightarrow Y$ has image contained in a finite union of flats \cite{KL}. One can also deduce this by taking asymptotic cones of a theorem of Eskin and Farb \cite{EF}.  Given such a map $f$, we call a point $x \in \R^k$ {\em flat} if there exists $r>0$ such that $B(x,r) \subset \R^k$ maps to a single flat in $Y$.  Given $f$, we will call the set of non-flat points for $f$ the exceptional set for $f$.

\begin{lemma}[Higher rank Mostow-Morse]
\label{lemma:morse}
Let $Y$ be an affine building of real rank $r$ and $f:\R^k \rightarrow Y$ a bilipschitz map.  Then the exceptional set $Z$ for $f$ has codimension $2$.  In fact $Z$ is contained in a finite collection of bilipschitz images linear spaces of dimension $d-2$.
\end{lemma}

\noindent Since codimension 2 sets in one dimensional flats are empty, this statement is a direct generalization of the standard Mostow-Morse lemma in hyperbolic spaces. This Lemma is proven in subsection \ref{quasiflats} and is a necessary component of our proofs. While we do not investigate this here, it seems that Lemma \ref{lemma:morse} should be true in a much broader context, namely any setting where there is a) a quasi-flat theorem and b) some combinatorial control on intersections of flats, including perhaps settings like \cite{BHS, H}. We prove the lemma from the quasi-flats theorem, it seems quite difficult to prove directly without first proving that theorem.  There is a coarse analogue of Lemma \ref{lemma:morse} for quasi-flats in symmetric spaces and buildings, but the coarse version of the statement becomes quite cumbersome.

We use Lemma \ref{lemma:morse} in proving both parts of Theorem \ref{rigidityThm}, though a proof of the first part is possible, though much more cumbersome, without it.  To prove both parts of the theorem we pass to asymptotic cones and study the map restricted to flats.  To prove the first part of the theorem, we look at a flat point $x$ of a flat $F$ where we have a bilischitz map of domains in $\R^r$.  By Rademacher-Stepanov this map is differentiable almost everywhere and the goal is to see that the derivative is a linear embedding of patterns.  To check this, one uses that the pattern arises as the intersection of $F$ with transverse flats to control the derivative.  To prove this one uses that these intersections are codimension one in both flats and so are not contained in the exceptional sets.  The proof of the second conclusion is more difficult.  The key step is to show that a flat maps to a single flat.  The hypothesis allows us to see that the derivative
along a flat is everywhere conformal and so the map on the flat is smooth off the exceptional set by a result of Gehring. The hypotheses imply that
the isometric part of the derivative is an element of the Weyl group at every point and the fact that the exceptional set is codimension 2 allows us to conclude this Weyl group element is constant and then check that the rescaling must be the same at every point.  This shows the map on the flat is globally isometric.

The use of Lemma \ref{lemma:morse} leads to a short elegant proof in our context and also allows for significant simplification of the proofs of \cite{E,D}. To conclude the proof of Theorem \ref{rigidityThm} in the setting of symmetric spaces requires an additional input, namely the main result of \cite{KL2} which we use to conclude that the image
of the quasi-isometric embedding, which is a union of isometrically embedded flats, is actually a sub-symmetric space.

We now state some corollaries of Theorem \ref{rigidityThm}.  In section \ref{patterns} we classify all pattern embeddings (note that $B_n$, $C_n$, and $BC_n$ all have the same pattern, which we will call $BC_n$).      The conclusion is that these are all conformal except for some exceptional embeddings of $A_n$ into $BC_n$ or $F_4$ and some exceptional embeddings of $A_2$ and $BC_2$ into $G_2$.

\begin{corollary}\label{corollary:classification} Among equal rank, irreducible buildings or symmetric spaces all quasi-isometric embeddings are at bound distance from totally geodesic unless the domain is of type $A_n$ and the range of type $B_n$, $C_n, BC_n$ or $F_4$ or unless the source is type $D_4$ and the range is type $A_4$ or unless the range is of type $G_2$. \end{corollary}

As simple example of this is, for example, that all quasi-isometric embeddings of $SL_n(\R)$ into $SL_n(\C)$ (for $n>2$) are near isometric embeddings.     As discussed in sections \ref{ANmaps} and \ref{patterns} our rigidity results and construction are close to complementary.    See section \ref{Open} for a discussion of the few cases where neither applies.  We also have an additional corollary concerning quasi-isometric embeddings with ``small constants".

\begin{corollary} \label{local} Let $X$ and $Y$ be irreducible buildings or symmetric spaces of equal rank $r>1$.  Then there exists a constant $K$, depending only on the Weyl patterns of $X$ and $Y$ such that for any $K'<K$ and any $C$, any $(K,C)$-quasi-isometric embedding of $X$ into $Y$ is at bounded distance from an isometry. \end{corollary}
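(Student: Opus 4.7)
The plan is to deduce Corollary \ref{local} from part (2) of Theorem \ref{rigidityThm} by producing a threshold $K_0 > 1$, depending only on the Weyl patterns $C_1, C_2$ of $X$ and $Y$, below which every linear pattern embedding $C_1 \hookrightarrow C_2$ is forced to be conformal. Given such a $K_0$, Theorem \ref{rigidityThm}(2) immediately yields that every $(K',C)$-quasi-isometric embedding of $X$ into $Y$ with $K' < K_0$ is at bounded distance from a totally geodesic embedding, which for irreducible symmetric spaces of equal rank is --- up to a discretely varying scaling factor that can be absorbed into a further shrinkage of $K_0$ --- an isometric embedding.

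To produce $K_0$, I would enumerate pattern embeddings by combinatorial type. For each function $\pi$ from the hyperplanes of $C_1$ to those of $C_2$, the condition that $g \in GL_r(\R)$ send each hyperplane $H$ of $C_1$ to $\pi(H)$ is a system of linear equations on $g$. Since $C_1$ is irreducible of rank $r$, its hyperplanes span $\R^r$, so the solution space has dimension at most one. As $\pi$ ranges over a finite set, the quotient $\mathcal{E}/\R^*$ of the pattern-embedding set $\mathcal{E} \subset GL_r(\R)$ by the scaling action is finite. The quasi-conformal constant $K(g) = s_{\max}(g)/s_{\min}(g)$ is scale-invariant and equals $1$ precisely on conformal maps, so it descends to a well-defined function on $\mathcal{E}/\R^*$ taking value $1$ on conformal classes and strictly greater than $1$ elsewhere. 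Setting $K_0$ to be the minimum of the non-unit values --- or any fixed constant $>1$ if every class is conformal --- gives the threshold, with $K_0 > 1$ as the minimum of a finite set of reals each strictly greater than $1$.

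The main obstacle I anticipate is handling the combinatorial enumeration carefully: not every assignment $\pi$ yields a genuine element of $GL_r(\R)$, and one must rule out degenerate types where the linear constraints force $g$ to be noninvertible or collapse several distinct hyperplanes of $C_1$ to a single one. Once the finite catalogue of realizable $\pi$'s is in hand --- essentially the classification content of Section \ref{patterns} --- the rest of the argument is a clean compactness observation on the finite set of QC-values together with a direct invocation of Theorem \ref{rigidityThm}(2). A secondary subtlety is the passage from ``totally geodesic embedding'' to ``isometry'' in the conclusion, but since totally geodesic embeddings between fixed irreducible symmetric spaces of equal rank occur in finitely many combinatorial types each with a determined scaling factor, excluding the non-unit scaling factors only further shrinks $K_0$ by a pattern-dependent amount.
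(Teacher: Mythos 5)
Your proposal is correct and follows the route the paper intends: the corollary is an immediate consequence of Theorem \ref{rigidityThm}(2) once one observes that the linear pattern embeddings of $C_1$ into $C_2$ form only finitely many classes up to scaling, so the quasi-conformality constants of the non-conformal ones are bounded away from $1$, and the residual scaling ambiguity in ``totally geodesic'' versus ``isometric'' is likewise discrete. One small correction: the one-dimensionality of each combinatorial type $\pi$ follows from \emph{irreducibility} of $C_1$ (a linear map preserving every hyperplane of an irreducible pattern is forced to be scalar), not merely from the hyperplanes spanning $\R^r$ --- for a reducible pattern the preserving maps form a higher-dimensional family, which is precisely why the remark following the corollary excludes the reducible case.
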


\noindent{\bf Remark:} One might expect the gap in the Corollary to be a result of a limiting argument, taking $(K_i,C)$-quasi-isometric embeddings for some sequence $K_i \rightarrow 1$ and then analysing the case of $(1,C)$ embeddings.  That $(1,C)$ embeddings are necessarily isometric can be proven using a somewhat simplified version of the proof of Theorem \ref{rigidityThm}.  This limiting argument would yield only some non-explicit $K$ slightly larger than $1$. If one proves the Corollary directly from Theorem \ref{rigidityThm} instead, one obtains an explicitly computable $K$ that depends only on the Weyl patterns of $X$ and $Y$.

The ideas used in this paper find further application in work of the first author and Nguyen on quasi-isometric embeddings of non-uniform higher rank lattices \cite{FisherNguyen}.

We end the introduction with some remarks on the case of embeddings between rank one spaces and embeddings where the rank is allowed to increase. Without the assumptions of equal rank and higher rank the picture becomes quite opaque and it is not clear that any rigidity remains.    In rank one, a theorem of Bonk and Schramm  shows, in particular, that every rank one symmetric space quasi-isometrically embeds in $\hyp^n$ for $n$ sufficiently large (and hence into $\C\hyp^n$ and $\hyp\hyp^n$) \cite{BS}.    Thus quasi-isometric embeddings exist in many setting where there are no isometric embeddings. One can iterate these  constructions and combine them with isometric embeddings to build various exotic quasi-isometric embeddings between say $\hyp\hyp^n$ and $\hyp\hyp^m$ for some $m>>n$.   Further, even when isometric embeddings do exist, quasi-isometric embeddings can be quite far from isometric,  even equivariantly, for example quasi-fuchsian groups in $\hyp^3$ and more general bendings of cocompact lattices in $\mathbb H^n$ inside of $\Isom(\mathbb H^{n+1})$ are quasi-isometric as along as they remain convex cocompact.

When the rank of the target is higher than that of the domain things are also not well controlled, even if both are higher rank (since rank increases under quasi-isometric embeddings, there are none where the rank of the domain exceeds that of the target).     For example, any $X$ can be quasi-isometrically embedded in  $X \times \R$ as the graph of any Lipschitz function $X \to \R$.   In particular, there are  many strange embeddings of $SL(n,\R)$ into $SL(n+1,\R)$ of this type as  $SL(n,\R) \times \R$ is a totally geodesic (and full rank) subspace of $SL(n+1,\R)$.

In both of these cases it still seems to be a difficult question to determine precisely when quasi-isometric embeddings exist, see Section \ref{Open} for some open problems along these lines.

\section{$AN$-maps}\label{ANmaps}

In this section we describe a construction of some ``exotic" quasi-isometric embeddings.    These examples are built from injective homomorphisms of solvable Lie groups.    Let $G$ be a semi-simple Lie group, with Iwasawa decomposition $G = KAN$.     The solvable group $AN$ acts simply transitively on $X=K\backslash G$ by isometries, and so is quasi-isometric to $X$.    The key observation in our construction is:

\begin{proposition}\ref{ANisQI} Let $A = \R^k$ for some $k\geq 1$, and let $N_1$ and $N_2$ be two nilpotent Lie groups with $A$ actions so that $N_i$ is uniformly exponentially distorted in $AN_i$.    If $ f: N_1 \to N_2$ is an $A$-equivariant embedding of Lie groups then the induced embedding $AN_1 \to AN_2$ is a quasi-isometric embedding. \end{proposition}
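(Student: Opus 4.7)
The plan is to extend $f$ to a Lie group homomorphism $F \colon AN_1 \to AN_2$ by $F(an) = a f(n)$. The $A$-equivariance of $f$ is exactly what makes $F$ a homomorphism: by the semidirect product multiplication rule,
\[
F((a_1 n_1)(a_2 n_2)) = a_1 a_2\, f((a_2^{-1}\cdot n_1) n_2) = a_1 a_2\, (a_2^{-1}\cdot f(n_1))\, f(n_2) = F(a_1 n_1)\, F(a_2 n_2).
\]
The upper Lipschitz bound is then immediate: a smooth group homomorphism intertwines left translations, so with left-invariant Riemannian metrics on both sides $\|dF_g\|$ is constant in $g$, and $F$ is globally $L$-Lipschitz with $L = \|dF_e\|$.

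The heart of the argument is the matching lower bound, and I would first establish it on the subgroup $N_1$. The plan is to chain the following equivalences: (i) $d_{AN_i}(e,n) \asymp \log(1 + d_{N_i}(e,n))$ for $n \in N_i$, from uniform exponential distortion; (ii) $d_{N_1}(e,n) \asymp d_{f(N_1)}(e,f(n))$, since $f$ is a Lie group isomorphism onto its image and any two left-invariant Riemannian metrics on a Lie group are mutually bi-Lipschitz; and (iii) $\log d_{f(N_1)}(e,f(n)) \asymp \log d_{N_2}(e,f(n))$, because $f(N_1)$ is a closed Lie subgroup of the simply connected nilpotent group $N_2$, hence at most polynomially distorted, and $\log$ turns polynomial distortion into a multiplicative constant. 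Chaining yields $d_{AN_1}(e,n) \asymp d_{AN_2}(e,f(n))$ for $n \in N_1$.

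To pass to a general element $g = an \in AN_1$, I would switch to the $NA$ decomposition. Writing $g = n' a$ with $n' = a n a^{-1} \in N_1$, the $A$-equivariance of $f$ gives $F(g) = f(n')\, a$. Setting $D = d_{AN_2}(e,F(g))$, the $1$-Lipschitz projection $AN_2 \to A$ gives $|a|_A \leq D$, and a triangle inequality together with left-invariance in $AN_2$ gives $d_{AN_2}(e, f(n')) \leq D + |a|_A \leq 2D$. Applying the $N_1$-case bound to $n'$ yields $d_{AN_1}(e, n') \leq 2KD + C$, and a final triangle inequality in $AN_1$ produces
\[
d_{AN_1}(e, g) = d_{AN_1}(e, n' a) \leq d_{AN_1}(e, n') + |a|_A \leq (2K+1)\, D + C,
\]
which is the desired lower bound.

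The main obstacle is the $N_1$-case: the hypothesized exponential distortion of $N_i$ in $AN_i$ must be reconciled with the generically nontrivial (but at worst polynomial) distortion of $f(N_1)$ as a closed Lie subgroup of $N_2$. The argument works precisely because the $AN_i$-metric already lives on a logarithmic scale with respect to the $N_i$-metric, so polynomial distortion on the $N_i$-side is absorbed into the multiplicative quasi-isometry constant rather than destroying the estimate.
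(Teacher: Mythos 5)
Your proof is correct and follows essentially the same route as the paper: both arguments split $g=an$ (you use $n'a$) via the triangle inequality and the distance-decreasing projection to $A$, and both handle the $N$-factor by combining the polynomial distortion of a closed subgroup of a simply connected nilpotent group with the (two-sided) exponential distortion of $N_i$ in $AN_i$. If anything, your version is slightly cleaner about the direction of the polynomial-distortion inequality, which the paper's write-up states backwards at one point, and you correctly flag that the reverse inequality $\log(1+\|n\|_{N_i})\lesssim\|n\|_{AN_i}$ is the automatic half of the distortion estimate.
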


\noindent{\bf Remark:} We call maps satisfying the hyphotheses of the proposition {\em $AN$ maps}.

\begin{proof}

We use the notation $\|g\|_G$ throughout for the distance in $G$ from $g$ to the identity. Here uniformly exponentially distorted means that there is a $C>0$ so that for all $n \in N$ we have $||n||_{AN} \leq C \log(1+||n||_N)$.    This holds for the solvable groups $AN$ coming from semi-simple Lie groups as $N$ is spanned by non-trivial root spaces.   It holds in many other, but not all, solvable Lie groups.

Let $\lambda$ be such that $f$ is $\lambda$-Lipschitz (such a constant exists as the map $AN_1$ to $AN_2$ is a homomorphism).  The content of the proposition is that there is a linear lower bound on distances in $AN_2$ in terms of the distances in $AN_1$.  Since $f$ is a homomorphism, it suffices to find such a bound for distance from the identity.

The following holds because the exponential map for nilpotent groups is polynomial.

\begin{claim} Let $N$ and $N'$ be simply connected nilpotent Lie groups.   For any embedding $N$ as a closed subgroup of $N'$ there is a polynomial $P$ so that for all $n \in N$  we have $||n||_{N}  \leq P(||n||_{N'})$. \end{claim}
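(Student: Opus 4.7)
The plan is to work in exponential coordinates, identifying $N$ with its Lie algebra $\mathfrak{n}$ and $N'$ with $\mathfrak{n}'$ via the exponential maps (which are global diffeomorphisms since the groups are simply connected nilpotent). The closed embedding $N \hookrightarrow N'$ then corresponds to the linear inclusion of Lie subalgebras $\mathfrak{n} \hookrightarrow \mathfrak{n}'$, and the exponential maps for the two groups agree on $\mathfrak{n}$. Fixing a Euclidean norm $|\cdot|$ on $\mathfrak{n}'$, its restriction serves as a compatible norm on $\mathfrak{n}$.

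The proof then reduces to two comparisons between the group norms and the Euclidean norm on the Lie algebra. The first is an easy upper bound
\[||n||_N \leq C_1 |\log n| \quad \text{for all } n \in N,\]
obtained by using the one-parameter subgroup $t \mapsto \exp(t \log n)$, $t \in [0,1]$, as a candidate path from $e$ to $n$; its length in any left-invariant Riemannian metric is a constant multiple of $|\log n|$. The second, harder estimate is a polynomial growth bound for the ambient group:
\[|\log n| \leq Q(||n||_{N'}) \quad \text{for all } n \in N',\]
for some polynomial $Q$. This is essentially the Ball-Box theorem for simply connected nilpotent Lie groups: in exponential coordinates adapted to the lower central series $\mathfrak{n}' = \mathfrak{n}'_1 \supset \mathfrak{n}'_2 \supset \cdots \supset \mathfrak{n}'_{s'}$, a Riemannian ball of radius $R$ extends only $O(R^i)$ in the $i$-th graded piece, so its Euclidean diameter is $O(R^{s'})$. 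This is where the polynomial nature of the exponential map enters: the BCH formula terminates in the nilpotent case, so multiplication in exponential coordinates is polynomial, and hence the Lie algebra coordinates of a word of length $R$ in a bounded generating set grow polynomially in $R$.

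Combining the two estimates, for $n \in N \subset N'$ one obtains
\[||n||_N \leq C_1 |\log n| \leq C_1 Q(||n||_{N'}),\]
so $P = C_1 Q$ is the required polynomial. The main obstacle is the polynomial growth estimate in $N'$; once that standard fact is granted, the rest is a short chain of inequalities made possible by the agreement of the two exponential maps on $\mathfrak{n}$ and the choice of compatible Euclidean norms.
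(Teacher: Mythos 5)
Your proof is correct and follows exactly the route the paper intends: the paper offers no argument beyond the remark that "the exponential map for nilpotent groups is polynomial," and your two-step estimate (bounding $\|n\|_N$ by $|\log n|$ via the one-parameter subgroup, then bounding $|\log n|$ polynomially in $\|n\|_{N'}$ via the terminating BCH formula / ball-box estimate, using that the exponentials agree on the subalgebra) is precisely the standard elaboration of that remark.
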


We now return to the proof of the proposition.   Let $g=an$ be an element of $AN$  as above.    The triangle inequality gives $||g|| \leq ||n|| + ||a||$.   Note that all norms appearing in this argument are norms in the group $AN$, not in any subgroup.  Since the projection to $A$ is distance decreasing, $||g|| \geq ||a||$.      We also have: $$||g|| = d(an,Id) \geq d(an,a) - d(a,Id) = ||n||-||a||$$

Thus, if $||a|| \leq \half ||n||$, we have $||g|| \geq \half ||n||$.   Otherwise, $||g|| \geq ||a||$.   So we have:

$$||a||+||n|| \geq ||g|| \geq \half \hbox{max}(||a||,||n||) \geq \frac{1}{4}(||a||+||n||)$$

Thus the distance to the identity is bilipschitz equivalent to the distance in the product $A \times N$ where $A$ and $N$ are given the norms induced by their inclusions in $AN$.    Since $f$ is a linear isomorphism along $A$ is it bilipschitz there.     For $f$ embedding $N_1$ into $N_2$ we have that the norm in $N_2$ is bounded by a polynomial in the norm in $N_1$ by the claim, and so, by uniform exponential distortion, the norm of an $n$ in $AN_2$ is bounded by the logarithm of a polynomial of an exponential of the norm in $AN_1$.    This yields the desired linear lower bound.

\end{proof}

For $G=KAN$ we have that $N$ splits as a sum of positive root spaces $N = \oplus_{\lambda} E_{\lambda}$ where $\lambda$ is a linear function on $A$ describing the action of $A$ on the part of the Lie algebra of $N$ tangent to $E_{\lambda}$.    An $A$-equivariant embedding of $N_1$ into $N_2$ must send these root spaces into root spaces.

If $\lambda_1$ and $\lambda_2$ are positive roots in $G_1$ then either $\lambda_1 + \lambda_2 = \lambda$ for a root $\lambda$, in which case $[E_{\lambda_1},E_{\lambda_2}] = E_{\lambda}$ or $\lambda_1 + \lambda_2$ is not a root, in which case $E_{\lambda_1}$ and $E_{\lambda_2}$ commute.       Since a homomorphism respects brackets, it follows that if $E_{\lambda_i}$ maps into the $E_{\eta_i}$ and $\lambda_1 + \lambda_2 = \lambda$ then $\eta_1 + \eta_2 = \eta$ for a root $\eta$ and $E_{\lambda}$ maps into the $\eta$-root space of $N_2$.

When $\lambda_1 + \lambda_2$ is not a root in $G_1$, we must have that the images in the $\eta_1$ and $\eta_2$ root spaces in $G_2$ that commute.   For simplicity, assume that $G_2$ is $\R$-split or complex.  Then this implies that $\eta_1 + \eta_2$ is not a root in $G_2$.      Under this assumption, we have an additive map $\lambda \mapsto \eta$ sending positive roots for $G_1$ to positive roots for $G_2$ such that two roots in the domain sum to a root iff they do in the range.    This linear map is nothing but the map induced on $A$.    Conversely, given such a map, one clearly has an embedding of solvable groups.   In summary:

\begin{proposition}  Let $G_1$ and $G_2$ be semisimple real Lie groups with restricted root systems $R_1$ and $R_2$ of equal rank.     There is an embedding of $AN_1$ into $AN_2$ if there is a linear automorphism $T$ of $A$ carrying positive roots into positive root and such that $\lambda_1$ and $\lambda_2$ sum to a root iff $T(\lambda_1)$ and $T(\lambda_2)$ do.   If  the $G_i$ are  both $\R$-split, both complex, or $G_1$ split and $G_2$ complex then the converse holds . \end{proposition}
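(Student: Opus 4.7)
The plan is to work at the Lie algebra level, using the $\mathfrak{a}$-weight decompositions $\mathfrak{n}_i = \bigoplus_{\lambda \in R_i^+} E_\lambda^{(i)}$. I identify both copies of $\mathfrak{a}$ with $\R^r$, view $T$ as a linear automorphism of $\mathfrak{a}^*$ acting on roots, and write $S : \mathfrak{a} \to \mathfrak{a}$ for its inverse transpose so that $(T\lambda)(Sa) = \lambda(a)$ for every root $\lambda$ and $a \in \mathfrak{a}$.

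For the forward direction I would construct a Lie algebra embedding $\bar\phi : \mathfrak{a} \oplus \mathfrak{n}_1 \to \mathfrak{a} \oplus \mathfrak{n}_2$ by setting $\bar\phi|_\mathfrak{a} = S$ and choosing injective linear maps $\phi_\lambda : E_\lambda^{(1)} \to E_{T\lambda}^{(2)}$, then exponentiating to obtain the group embedding. The identity $(T\lambda)(Sa) = \lambda(a)$ makes $\bar\phi$ intertwine brackets of the form $[a,n]$ automatically. For a bracket $[n_1,n_2]$ of root vectors with $\lambda_1 + \lambda_2 \notin R_1$, the hypothesis forces $T\lambda_1 + T\lambda_2 \notin R_2$ and both sides vanish. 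Otherwise one needs the compatibility $\phi_{\lambda_1+\lambda_2}([n_1,n_2]) = [\phi_{\lambda_1}(n_1), \phi_{\lambda_2}(n_2)]$, which I would arrange by fixing $\phi_{\alpha_i}$ for each simple root $\alpha_i$ and defining $\phi_\lambda$ for non-simple $\lambda$ inductively on height via the Jacobi identity, in the manner of a Chevalley basis construction; in the split or complex case each $E_\lambda$ is one-dimensional, so this reduces to a consistent system of scalar equations.

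For the converse, let $\phi : AN_1 \hookrightarrow AN_2$ be an embedding. Since $N_i$ is the commutator subgroup of $AN_i$ (as $A$ acts by non-trivial weights on each root space), $\phi(N_1) \subseteq N_2$, and composing with the quotient $AN_2 \to A$ yields a homomorphism $A \to A$ whose differential $S$ is the candidate map; $S$ is injective because any $a \in \ker S$ would send a semisimple element to a unipotent one, forcing $\phi(a) = e$. The key step is to conjugate $\phi$ inside $N_2$ so that the induced Lie algebra map $\bar\phi$ satisfies $\bar\phi(\mathfrak{a}) \subseteq \mathfrak{a}$: writing $\bar\phi(a) = Sa + v(a)$ with $v(a) = \sum_\mu v_\mu(a) \in \mathfrak{n}_2$, the identity $[\bar\phi(a), \bar\phi(a')] = 0$ forces, after induction on height of $\mu$, the symmetry $\mu(Sa) v_\mu(a') = \mu(Sa') v_\mu(a)$; when $E_\mu^{(2)}$ is one-dimensional this yields $v_\mu(a) = \mu(Sa) w_\mu$ for a fixed $w_\mu$, and conjugation by $\exp(\sum_\mu w_\mu) \in N_2$ cancels $v$. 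Once $\bar\phi(\mathfrak{a}) \subseteq \mathfrak{a}$, each $\bar\phi(E_\lambda^{(1)})$ is a common $\mathrm{ad}(\mathfrak{a})$-eigenspace with weight $\lambda \circ S^{-1}$, hence lies in a single root space $E_{T\lambda}^{(2)}$, defining $T$ on roots; linearity of $T$ on $\mathfrak{a}^*$ comes from linearity of $S$. The equivalence $\lambda_1+\lambda_2 \in R_1 \Leftrightarrow T\lambda_1+T\lambda_2 \in R_2$ then follows by comparing $\bar\phi([E_{\lambda_1}^{(1)}, E_{\lambda_2}^{(1)}])$ with $[E_{T\lambda_1}^{(2)}, E_{T\lambda_2}^{(2)}]$, invoking that in the split or complex setting $[E_{\mu_1}^{(2)}, E_{\mu_2}^{(2)}] = E_{\mu_1+\mu_2}^{(2)}$ is non-zero whenever $\mu_1+\mu_2$ is a root.

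The main obstacle is the conjugation step: passing from the cocycle equation to a coboundary requires the target root spaces to be one-dimensional, which is exactly the content of the split or complex hypothesis. The final bracket non-degeneracy relies on the same assumption. Both can fail in the general real case, which is why the converse is not asserted beyond the split/complex setting.
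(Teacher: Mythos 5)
Your proposal follows essentially the same route as the paper's own (very terse) argument: decompose $\mathfrak{n}_i$ into restricted root spaces, read off the linear map $T$ from the induced map on $\mathfrak{a}$, and use that brackets of root spaces are governed by whether roots sum to roots, with the split/complex hypothesis guaranteeing $[E_{\mu_1},E_{\mu_2}]=E_{\mu_1+\mu_2}\neq 0$ whenever $\mu_1+\mu_2$ is a root. You actually supply more detail than the paper does --- notably the conjugation step normalizing $\bar\phi(\mathfrak{a})\subseteq\mathfrak{a}$ and the height induction for the forward construction, both of which the paper leaves implicit --- so the proposal is fine.
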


\begin{proposition} There is a quasi-isometric embedding $Sl_{n+1}(\R)$ into $Sp_{2n}(\R)$. \end{proposition}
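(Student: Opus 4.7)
The strategy is to exhibit an explicit $AN$-map from $SL_{n+1}(\R)$ to $Sp_{2n}(\R)$ and then invoke Theorem~\ref{ANisQI}. Both groups are $\R$-split of common rank $n$, so by the preceding proposition it is enough to produce a linear automorphism $T$ of $\R^n$ that carries the positive roots of $A_n$ into positive roots of $C_n$ and satisfies the sum-to-root condition: $\lambda_1+\lambda_2$ is an $A_n$ root iff $T(\lambda_1)+T(\lambda_2)$ is a $C_n$ root.

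I identify the positive $A_n$ roots with $\{t_i - t_j : 1 \le i < j \le n+1\}$ on the hyperplane $\sum t_i = 0$ in $\R^{n+1}$, and the positive $C_n$ roots with $\{e_i \pm e_j : 1 \le i < j \le n\} \cup \{2e_i\}$ in $\R^n$. Define $T$ by $T(t_i) = e_i$ for $1 \le i \le n$ and $T(t_{n+1}) = -e_1$; this is a linear isomorphism of the hyperplane onto $\R^n$. The image of a positive $A_n$ root $t_i - t_j$ is: $e_i - e_j$ if $j \le n$; $2e_1$ if $(i,j)=(1,n+1)$; and $e_1 + e_i$ if $1 < i \le n$ and $j = n+1$. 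Each is a positive $C_n$ root.

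For the sum-to-root verification, two positive $A_n$ roots sum to an $A_n$ root precisely in the telescoping case $(t_a-t_b,\ t_b-t_c)$, in which case the image sum is $T(t_a - t_c)$, again a positive $C_n$ root. For the three non-telescoping configurations (disjoint index sets, shared first index, shared last index), a direct case check shows that the image sum always has either at least three nonzero coordinates, or a coordinate of absolute value $3$, or the pattern $2e_i - e_j - e_k$, none of which match the $\pm e_i \pm e_j$ or $\pm 2 e_i$ shape of a $C_n$ root. The sub-cases involving $t_{n+1}$ are handled uniformly by observing that the ``wrap'' $T(t_{n+1}) = -e_1$ only contributes an extra $+e_1$ to the image, which always yields a coefficient $\ge 2$ on $e_1$ paired with at least one further nonzero coordinate.

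With $T$ in hand, the preceding proposition supplies the injective homomorphism $AN_1 \hookrightarrow AN_2$, Theorem~\ref{ANisQI} upgrades this to a quasi-isometric embedding, and the standard isometry $AN_i \cong K_i \backslash G_i$ produces the desired quasi-isometric embedding of $SL_{n+1}(\R)$ into $Sp_{2n}(\R)$. The main obstacle is the choice of $T$: the naive assignment of simple roots to simple roots fails because, for the image pair $(e_a - e_n, e_a + e_n)$ of the $A_n$ roots $(t_a - t_n, t_a - t_{n+1})$, one has $(e_a - e_n)+(e_a + e_n) = 2 e_a$, a $C_n$ root, while the pre-image pair does not sum to an $A_n$ root, violating the iff. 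The wrap $T(t_{n+1}) = -e_1$ concentrates all ``long-root'' contributions on the single $e_1$-axis and sidesteps this collision, yielding a single-formula construction valid for every $n \ge 2$.
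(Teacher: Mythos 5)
Your proof is correct and follows essentially the same route as the paper: reduce to finding a linear map sending positive $A_n$ roots to positive $C_n$ roots compatibly with the ``sum to a root'' relation, then apply the $AN$-map machinery. Your explicit map is in fact the paper's map up to relabeling (your special indices $t_{n+1}$, $t_1$ and target coordinate $e_1$ play the roles of the paper's $x_0$, $x_n$ and $y_n$), and your case check of the non-telescoping pairs verifies correctly.
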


\begin{proof} By the previous proposition, we must find a linear map $\R^n \to \R^n$ which sends the positive roots of $A_n$ to positive roots of $C_n$ respecting when roots sum to roots.   Represent the root systems as:\\

\noindent $A_n$ with positive roots $x_i -x_j$ for $i>j$ on the space\\ $\R^n = \{(x_0,\cdots,x_n): x_0 + \cdots x_n =0\}$\\

\noindent and\\

\noindent $C_n$ with positive roots $2y_i$, $y_i + y_j$, and $y_i - y_j$ with $i>j$ on\\ $\R^n = \{(y_1,\cdots,y_n)\}$\\

\noindent then the map:

$$T(x_0,\cdots x_n) = \half ( 2x_1 - (x_0 + x_n), 2x_2 - (x_0 +x_n), \cdots, 2x_{n-1}-(x_0+x_n), x_n - x_0)$$

\noindent satisfies the requirements.    The corresponding root for $x_i - x_j$ for $i>j>0$  is $y_i - y_j$, for $x_i - x_0$ for $0<i<n$ is $y_i + y_n$ and for $x_n - x_0$ is $2y_n$.

Some additional work is required to show that this matching of roots spaces actually gives a homomorphism of nilpotent groups.  Essentially given two roots $\alpha$ and $\beta$ which sum to a root $\alpha + \beta$, one has that corresponding root spaces satisfy $[X_\alpha, X_\beta] = c_{\alpha,\beta}X_{\alpha +\beta}$ and one needs to check that it is possible to chose the $c_{\alpha,\beta}$ consistently over all choices of $\alpha$ and $\beta$.  For this particular case, one can could construct an explicit matrix embedding to check this condition.  For a more theoretical discussion see subsection \ref{nonSplitAN} and particularly Corollary \ref{typeA}. \end{proof}

\noindent {\bf Remark:} This same map gives a quasi-isometric embedding $Sl_{n+1}(\C)$ into $Sp_{2n}(\C)$, or any other $A_n$ to $C_n$ situation where the domain is $\R$-split (or complex if the target is as well).  The exact same construction works over $p$-adic fields.   It is interesting to compare this to the linear pattern embeddings classified in section \ref{patterns}.

{\noindent {\bf Remark:} There is some earlier work using $AN$ maps to build quasi-isometric embeddings, but only in cases where $A$ is one dimensional.  In particular, Brady and Farb use a similar construction to show that $\mathbb H^3$ quasi-isometrically embeds in $\mathbb H^2 \times \mathbb H^2$ \cite{BF}. In rank one, this construction is quite flexible and has other applications \cite{BF,Fo,L}, but it remains quite surprising that such a construction is possible in the more combinatorially complex higher rank case.}

It is interesting to look at the geometry of this map in more detail.    Restricted to a flat in $Sl_{n+1}$ corresponding to a coset of $A$ in $AN$ the map $f$ is precisely the linear map $T$ above.      The hyperplanes in $Sp_{2n}$ that do not come from those in $Sl_{n+1}$ come from the roots $2y_i$ for $i<n$ and $y_i + y_j$ for $i<j<n$.    These correspond to the functions $2x_i = x_0 + x_n$ (for $0 <i <n$) and $x_i + x_j = x_0 + x_n$ (with $0<i<j<n$).   The chambers of the flat in the domain are given by the ordering of the coordinates, and how these missing hyperplanes subdivide such a chamber depends on where $x_0$ and $x_n$ sit in this ordering.

A flat in $A_n$ has $(n+1)!$ chambers and one in $C_n$ has $2^n n!$.   Thus the``average" chamber must map across $\frac{2^n}{n+1}$ chambers.    The minimally subdivided chambers  are those where $x_0$ and $x_n$ are either the two smallest or two largest in the ordering (these come in four families from the ordering between $x_0$ and $x_n$ and whether they are minimal or maximal) and map to single chambers in $C_n$.  Those maximally subdivided are those with $x_0$ and $x_n$ the minimum and maximum (in either order).

For concreteness we look at $n=2$.     There are six chambers in our flat, corresponding to the permutations of $\{0,1,2\}$.    There are two chambers in which $x_0$ and $x_2$ are the minimum and maximum.     The ordering $x_0 < x_1 < x_2$ is the unique chamber fixed by $N$ (call this chamber $C_0$) and the ordering $x_2 < x_1 < x_0$ is the opposite chamber.     Both of these are sent to two chambers in $Sp_4/U(2)$.   The four orderings in which $x_1$ is either minimal or maximal are all sent to single chambers.

This gives a picture of what happens for an arbitrary flat.    Divide the flat into its six chamber, and label each according to whether it is the chamber fixed by $N$ (there is just one such), adjacent to this chamber, opposite to this chamber (the generic case), or adjacent to an opposite chamber.     What happens to every chamber is then determined as above.  It is not hard to see that the flats in $Sl_3$ are of the following types:

\begin{itemize} \item A flat passing through $C_0$.   These are described above, and have two chambers (opposite to each other in
    the flat) that map to two chambers, and the four others which map to single chambers.   Thus the flat maps to a
    total of eight chambers which is necessarily a single flat.   The map on flats if given by the linear map $T$
    above.
\item There are two chambers of $F$ (next to each other  in $F$) which are opposite to $C_0$,their neighbors in $F$
    which are adjacent to opposites to $C_0$, and two chambers adjacent to $C_0$.    This maps across eight total
    chambers, and hence has image at bounded distance from a single flat.   Unlike the previous case, however, the
    map is not globally linear as a map $\R^2 \to \R^2$.
\item There are two chambers of $F$ (next to each other  in $F$) which are both adjacent to $C_0$ or both adjacent
    to opposite to $C_0$.   The remaining four chambers are all opposite to $C_0$.    Such a flat maps to an image
    quasi-flat with ten chambers, and hence is not near a single flat.
\item (The generic case) All chambers in $F$ are opposite to $C_0$.    In this case all six chambers map across two
    chambers in the range.    The image quasi-flat has twelve chambers and so in not near a single flat.
\end{itemize}

It might be interesting to work out precisely what these quasi-flats look like.   In particular, in analogy with Proposition \ref{quasiflats}, what does the locus of points where the image essentially bends (is not near a single flat) look like?  The existence of these coherent families of quasi-flats not close to flats is genuinely surprising.

We now prove Theorem \ref{theoremhh}, which we can do by providing an explicit map on matrices.  The Iwasawa decomposition of $(\hyp^2)^n$ can be realized as $AN = \prod A_iN_i$ where each $A_i$ is a two by two diagonal matrix

$$\begin{bmatrix} a_i & 0 \\ 0 & a_i{\inv} \\ \end{bmatrix}$$

{\noindent} and each $N_i$ is a two by two diagonal matrix

$$\begin{bmatrix} 1 & u_i \\ 0 & 1 \\ \end{bmatrix}$$

where each $a_i$ is a non-zero real number and each $u_i$ is a real number.

The desired $AN$ embedding is then explicitly:

$$\begin{bmatrix} \alpha_1 &  u_1 & u_2 & \ldots & u_n\\ 0 & \alpha_2 & 0 & \ldots & 0\\ \vdots & \vdots & \vdots & \vdots & \vdots &\\ 0 & 0 & \ldots & 0 & \alpha_1{\inv}\ldots\alpha_{n}{\inv}\\

\end{bmatrix}$$

\noindent where the $\alpha_i$ are easily computed explicitly.  For instance in the case $n=2$, we have $\alpha_1 = {a_1^2}{a_2^2}$ and $\alpha_2=a_2^2$.

We can proceed to the proof of Theorem \ref{mostexotic}.  First note that in an $AN$-map there is always one chamber at infinity, the one fixed by $AN$ such that any every flat passing through it maps lineary to a flat in the range.  We show a map is not an $AN$ map by showing this does not occur and in fact that no flat maps linearly from domain to range.    We begin by describing the image of flats in $\hyp^2 \times \hyp^2$ into $SL(3, \mathbb R)/O(3)$ and then combining this with the description given above for flats $SL(3, \mathbb R)/O(3)$ mapping into $SP(4, \mathbb R)/U(2)$.

   For this description, we will think of $\hyp^2$ as being metrics on $\R^2$ up to scalars, and
   ${SL_3(\R)}/{SO_3(\R)}$ as the same on $\R^3$.    The embedding is then: choose to planes $P_1$ and $P_2$ in
   $\R^3$, and identify each with $\R^2$.   A point of $\hh$ can then be thought of a a metric (up to scalar) on each
   of $P_1$ and $P_2$.   Rescale so they agree on their line of intersection (let's call that $l$), and extend that to
   an inner-product on $\R^3$ by having $P_1$ and $P_2$ meet orthogonally.

Note that this tells us the image of the embedding, namely all metrics on $\R^3$ for which $P_1$ and $P_2$ meet orthogonally.     Also note that there is a Lipschitz inverse, mapping ${SL_3(\R)}/{SO_3(\R)}$ to $\hh$ just by restricting the metric to $P_1$ and $P_2$.

The geodesics in $\hyp^2$ correspond to the set of metrics keeping a pair of lines orthogonal, and similarly a maximal flat in ${SL_3(\R)}/{SO_3(\R)}$ corresponds to the set of metrics keeping a triple of lines orthogonal.      The construction includes a distinguished line $l = P_1 \cap P_2$.   Choosing another line $l_1 \subset P_1$ and $l_2 \subset P_2$, we get a product of two geodesics in $\hh$ which naturally maps to the flat in ${SL_3(\R)}/{SO_3(\R)}$ in which $l, l_1, l_2$ are orthogonal.

 This flat in $\hh$ is subdivided into four quadrants - if we choose vectors $v \in l$, $u_1 \in l_1$, $u_2 \in l_2$
 then we can think of the quadrants as determined by which of $v$ and $v_i$ is longer in the metrics on $P_i$.
 The flat in ${SL_3(\R)}/{SO_3(\R)}$ is similarly divided into six chambers according to the ordering of sizes of
 $v$,
 $v_1$, and $v_2$.     From this point of view we can see how some chambers in the domain map to two chambers in the
 range and others to only one.     For example, if we know that $v$ is shorter than $v_1$ in $P_1$ and $v$ is longer
 than $v_2$ in $P_2$ then, once we rescale to get the metrics to agree on $v$, we know that the order is $||v_2|| <
 ||v|| < ||v_1||$ and we have a quadrant that maps to a single chamber.    On the other hand, if $v$ is shorter than
 $v_1$ in $P_1$ and shorter than $v_2$ in $P_2$, after rescaling we might have $||v||<||v_1||<||v_2||$ or
 $||v||<||v_2||<||v_1||$, so this quadrant gets mapped across two chambers.

 Viewing the chambers in $\hh$ as a points in $\partial \hyp^2 \times \partial \hyp^2$, what we see is that a chamber
 in which neither point is $l$ is mapped to two chambers, as the one chamber with points points $l$ while those in
 which one is $l$ and one is not are mapped to a single chamber.    Thinking of chambers in ${SL_3(\R)}/{SO_3(\R)}$
 as flags $V_1 \subset V_2 \subset \R^3$ and the points at infinity in $\hyp^2$ as lines in $\R^2$ what we have is
 this:

\begin{itemize} \item A chamber in $\hh$ with points at infinity $l_1$ and $l_2$, neither equal to $l$ maps to the two chambers $
    l_1 \subset V$ and $l_2 \subset V$ where $V$ is the span of $l_1$ and $l_2$.
\item A chamber in $\hh$ with points at infinity $l_1$ and $l$, $l_1 \neq l$ maps to the chamber $ l_1 \subset
    P_1$.
    Likewise, a chamber with endpoints $l$ and $l_2$ maps to the one corresponding to the flag $l_2 \subset P_2$.
\item The chamber with both points at infinity equal to $l$ maps to the two chambers $l \subset P_1$ and $l \subset
    P_2$.
\end{itemize}

From this it is easy to determine what happens to flats : if none of the endpoints is $l$ the image is a quasi-flat made of eight chambers, if both geodesics have $l$ as an endpoint the image is a single flat (and the map is linear here).    Finally if one of the geodesics has $l$  as an endpoint and the other does not the image is again a single flat ( the metrics for which the three endpoints that are not $l$ are orthogonal ) but the map is not linear - one half of the flat maps to four of the chambers and the other half to two.

The embedding of ${SL_3(\R)}/{SO_3(\R)}$ into $Sp_4 (\R)/ U(2)$ can be described similarly (by extending a metric on $\R^3$ to one on $\R^4$ compatible with the standard symplectic form) but is more complicated.  The summary for chambers follows as described above.   In this terminology, there is a distinguished flag $V_1 \subset V_2$ in $\R^3$, and this flag as well as those opposite to it (meaning those $U_1 \subset U_2$ for which $U_1 \not\subset V_2$ and $V_1 \not\subset U_2$) map to two chambers.   Flags neither equal to nor opposite to $V_1 \subset V_2$  map to a single chamber.

We can compose these constructions to get an exotic embedding of $\hh$ into $Sp_4 (\R)/ U(2)$.     One can do this fixing one Iwasawa decomposition of $SL(3,\mathbb R)$, but one can also do this allowing the map $\iota_1:\hh \rightarrow SL(3, \R)/O(3)$ and the map $\iota_2:SL(3, \R)/O(3) \rightarrow SP(4,\R)/U(2)$ to be constructed using different choices of Iwasawa decompositions for $SL(3,\R)$. The choice of $AN$ in an Iwasawa decomposition depends on a choice of a Weyl chamber at infinity or equivalently of a full flag on $\R^3$.

The nature of this map will depend on how $P_1$, $P_2$, and $l$ are situated relative to the flag $V_1 \subset V_2$.     Consider the ``generic" case, where $l \not\subset V_2$ and $V_1 \not\subset P_1 \cup P_2$.  This corresponds to a ``generic" choice of two Weyl chambers in $SL(3, \R)/O(3)$, i.e. a choice of two opposite chambers.    There are then two special lines, namely $l_1 = P_1 \cap V_2$ and $l_2 = P_2 \cap V_2$.

\begin{itemize} \item The chamber in $\hh$ with endpoints $(l_1,l_2)$ maps to the two chambers corresponding to the flags $l_1
    \subset V_2$ and $l_2 \subset V_2$.    These two are both adjacent to $V_1 \subset V_2$ and so each map to a
    single chamber.    The image under the composition thus is two chambers.

\item The chamber $(l_1,l)$ maps to the single flag $l_1 \subset P_1$.  This is an adjacent chamber and so the
    image
    under the composition is a single chamber.   The same holds for the symmetric case $(l,l_2)$.

\item The chamber $(l,l)$ maps to the two flags $l \subset P_1$ and $l \subset P_2$.  Both of these are opposite,
    and so the image under the composition is four chambers.

\item A chamber $(l_1, l_2')$ where $l_2' \notin \{l,l_2\}$ maps to the two flags $l_1 \subset U$ and $l_2' \subset
    U$, for $U$ the span of $l_1$ and $l_2'$.   The first is adjacent and the second opposite, so the image has
    three chambers.    The same holds for the symmetric cases $(l_1',l_2)$.

\item A chamber $(l, l_2')$ (with $l_2'$ as above) maps to the single flag $l_2' \subset P_2$.   This is opposite
    and so maps to two chambers.

\item There is a bijection between the lines in $P_1 \setminus \{l_1,l\}$ and lines in $P_2 \setminus \{l_2,l\}$
    defined by saying two lines correspond iff their span contains $V_1$.    A chamber with endpoints $(l_1',l_2')$
    maps to the two flags $l_1' \subset U$ and $l_2' \subset U$ where $U$ is the span of $l_1'$ and $l_2'$.
    These
    are both adjacent if $l_1'$ and $l_2'$ correspond under the above bijection, and otherwise are both opposite.
    In the former case the image under the composition is two chambers, and in the latter is four.

\end{itemize}

\noindent {\bf Remark:} the bijection in the last item extends to the full boundaries by sending $l_1 \to l_2$ and $l \to l$.    The graph of this bijection is the boundary of an isometrically embedded $\hyp^2$ in $\hh$, and the composite embedding "bends" interestingly along it.

Which flats in $\hh$ map nicely to  $Sp_4 (\R)/ U(2)$?  If the image is to be a single flat and the map is to be linear, then every chamber must map to two.   A quick enumeration of the possibilities shows this never happens.     The only cases where the image is a single flat (eight total chambers) are:

\begin{itemize} \item The flat $l l_1\times ll_2$ : here the quadrant $(l,l)$ maps to half the flat, the quadrants $(l_1,l)$ and
    $(l,l_2)$ map to single chambers, and the quadrant $(l_1,l_2)$ maps to two chambers.
\item The flats of the form $l_1 l_1' \times l l_2'$ where $l_1'$ and $l_2'$ correspond  ( or the symmetric case $l
    l_1' \times l_2 l_2'$):   The quadrant $(l_1,l_2')$ maps to three chambers, $(l_1,l)$ maps to a single chamber,
    and the other two map to two chambers each.
\end{itemize}

\noindent Thus no flat maps linearly to one flat and the map cannot be an $AN$ map.  It is easy to check that a generic flat maps to 16 chambers and one can enumerate all possible behaviors by continuing the analysis above.  It would be interesting to have a less computational approach to showing that a map is not $AN$.  In \cite{Nguyen}, Nguyen gives some examples quite similiar to those we just discussed, where one can check that a map is not $AN$ by checking that no flat maps to a single flat, but the example above shows that this is not the only obstruction.

\section{Rigidity}\label{rigidity}

If $X_1$ quasi-isometrically embeds in $X_2$ of equal rank, then every maximal flat in $X_1$ gives a quasi-flat in $X_2$.    By results of \cite{KL} and \cite{EF}, such a quasi-flat is at bounded distance from a finite union of flats.    As we want to control the intersection of two quasi-flats we need more information on precisely what a quasi-flat can look like.    For simplicity we describe the results for bilipschitz embeddings of $\R^n$ into Euclidean buildings, since the results for quasi-flats in Eucldiean buildings or symmetric spaces follow formally by passage to asymptotic cones.  For background on buildings, symmetric spaces, asymptotic cones and their relations to one another, we refer the reader to \cite{KL}.

\subsection{Structure of Quasi-Flats in Buildings}

In this subsection,  $X$ will also be a Euclidean building of rank $d$. By a {\bf flat} we mean a top dimensional, isometrically embedded $\R^d$ in $X$.    Such a flat comes with a distinguished family of codimension one affine subspaces, which we call {\em Weyl hyperplanes}.    An affine subspace of a flat which is the intersection of the hyperplanes containing it is called a {\em subflat}.    Each subflat comes with an induced pattern of hyperplanes by intersection, and we continue to call these Weyl hyperplanes (of the subflat).    By a {\em Weyl box} in a (sub)flat we mean a convex set which is a finite intersection of closed Weyl halfspaces.  We now state a more precise form of Lemma \ref{lemma:morse} from the introduction. The description of the exceptional set here takes place in the range rather than the domain, but the two are, of course, related by a bilipschitz map.

\begin{lemma} \label{quasiflats} For any $K$, there is an $n$ so that for any $K$-bilipschitz embedding $f$ of $\R^d$ into $X$ there is a finite set $\{S_1,S_2,\cdots,S_n\}$ of codimension $2$ subflats of $X$ so that every point $x$ in the image of  $f$ not contained in any of the $S_i$ has a neighborhood contained a flat. \end{lemma}

\begin{proof}

We begin by discussing the local structure of a union of two flats.  For this we need some vocabulary.
Given a finite collection of disjoint rays, we form a {\em star} by identifying their initial points. A {\em fan} is a product $F=S \times \mathbb R^k$ where $S$ is a star.  If $S$ consists of exactly $4$ rays, then $F$ is a union of two flats intersecting transversely.

\begin{lemma} There is a $k$ (depending only on $X$) so that for any flats $F$ and $F'$ of $X$, off of at most $k$
codimension two subflats, each point of $F \cup F'$ has a neighborhood which is either contained entirely in a flat
or which is contained in a fan.
\end{lemma}

\begin{proof}
The intersection $F \cap F'$  is a Weyl box in a (sub)flat of each.   If this subflat is codimension two (or more)
then we are done (with $k=1$).   If it is codimension one then the intersection is a box in a hyperplane H.   The interior
points  of the box have neighborhoods which are the contained in the union of two flats which forms a fan.   Thus the points
at which the conclusion of lemma fails are the boundary points of the intersection.    The number of faces of such a
box is bounded by the combinatorics of the Coxeter system of $X$, and each face is contained in a codimension two
subflat of $F$ and $F'$.

Finally, if the intersection is of codimension zero, then the conclusion of the lemma holds in the interior and exterior
of the intersection (these points are locally contained in a single flat) and at points in the interior of faces of
the box of intersection (these are locally in a pair of transverse flats forming a fan).    Thus the points at which  it fails are  contained in the codimension two and higher facets of the boundary of the box.   As before, the number of these  facets is bounded by the Coxeter system, and each facet is contained in a codimension two subflat.
 \end{proof}

 \begin{corollary} For any $j$ there is an $m$ so that for any collection $F_1,F_2,\cdots,F_j$ of flats,  off of at
 most $m$ codimension two subflats (called {\bf exceptional}), each point of $\cup F_i$ has a neighborhood which is
 either contained entirely in a flat or which is contained in the union of finite flats meeting pairwise transversely
 along a common hyperplane $H$.
 \end{corollary}

\begin{proof} Let $x$ be a point of the union $\cup F_i$ off of the exceptional sets produced by the previous  lemma for the pairwise intersections.   If the various hyperplanes through $x$ along which the flats branch are the same in a neighborhood of $x$ then we are done.   If two of the flats intersect in something codimension two or more then we add that subflat to the collection of exceptional subflats (the number of such is bounded  by ${j}\choose{2}$).     Thus we need only address points at which three or more flats intersect.

If three flats intersect pairwise in a subflat of codimension two more then we add this intersection to the set of exceptional subflats.

Finally, if a three flats intersect near $x$ in a box in a  hyperplane $H$ then the lemma holds near $x$ unless unless $x$ is on the boundary of the box, and is thus in one of the codimension two subspaces containing the faces of the box.  The number of these is similarly bounded by $j$ and $X$. \end{proof}

We can now finish the proof of Lemma \ref{quasiflats}.  By \cite{KL}[Corollary 7.2.4], there is an $s$ depending only on $K$ so that image of $f$ is a subset of the union of at most $s$ flats.  Let $Y$ be the union of these flats.  Applying the corollary  gives a family of exceptional subflats off of which every point of $Y$ is has a neighborhood which is either flat (in which case there is nothing to prove) or finite collection of Weyl halfspaces meeting along a common hyperplane $H$.   In the latter case, we locally have an embedding of $\R^d$ into $\R^{d-1} \times S$ where $S$ is finite union of rays meeting at a single point.    As such an embedding must visit only two of the rays in $S$, we have the image contained in a union of two half spaces meeting along a hyperplane.   Such a union is itself a flat in $X$. \end{proof}

\subsection{Flats go to flats}

In this subsection, we prove that under hypotheses analogous to those of Theorem \ref{rigidityThm}, bilipschitz embeddings of Euclidean buildings take flats to flats.

\begin{proposition}\label{flatstoflats}  Let $X$ and  $Y$ be Euclidean buildings of equal rank, such that every $K$-quasi-conformal linear embedding of patterns is conformal.  The every $K$-bilipschitz embedding of $X$ in $Y$ sends flats to flats and is a similarity along every flat. \end{proposition}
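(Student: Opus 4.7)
The plan is to fix a $K$-bilipschitz embedding $f\colon X \to Y$, pick a flat $F \subset X$, and reduce the problem to a linear algebra statement about pattern embeddings applied at almost every point of $F$. First I would restrict $f$ to $F$, producing a $K$-bilipschitz embedding $\R^d \to Y$, and invoke Proposition \ref{quasiflats} to obtain a finite collection $S_1,\dots,S_n$ of codimension $2$ subflats of $Y$ off of which every image point has a neighborhood lying in a single flat of $Y$. The preimage of $S_1 \cup \cdots \cup S_n$ in $F$ under $f$ is a closed set of Hausdorff dimension at most $d-2$, so its complement $U \subset F$ is open, connected, and of full measure. On each connected piece of $U$, $f$ maps a small open set into a single flat of $Y$, giving a genuine $K$-bilipschitz map between open subsets of $\R^d$.

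Next, by Rademacher's theorem, $f|_F$ is differentiable at almost every point $p \in U$. At such a $p$ the derivative $L := Df_p$ is a linear $K$-bilipschitz map from $\R^d$ (identified with $F$) to $\R^d$ (identified with the local flat $F'$ of $Y$ containing $f(p)$). The main step, and the principal obstacle, is to show that $L$ carries the Weyl pattern $C_1$ of $X$ into the Weyl pattern $C_2$ of $Y$. The idea is to characterize walls of $F$ through $p$ as the loci across which other flats of $X$ branch off from $F$; concretely, for each Weyl hyperplane $H$ of $F$ through $p$ there exist flats $F_1, F_2 \subset X$ meeting $F$ exactly along a neighborhood of $p$ in $H$. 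Applying Proposition \ref{quasiflats} to $F_1$ and $F_2$ and shrinking if necessary, each of $f(F_i)$ is, near $f(p)$, contained in a flat of $Y$. Since $f$ is a bilipschitz embedding it cannot identify these images away from $f(H)$, so the three local flats must branch along a common codimension-one wall of $Y$ whose tangent is $L(H)$. Thus $L$ sends each Weyl hyperplane of $C_1$ at $p$ to a Weyl hyperplane of $C_2$, i.e., $L$ is a linear embedding of the patterns.

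Now I apply the hypothesis: a linear $K$-bilipschitz map is automatically $K^2$-quasi-conformal (or more cleanly, after adjusting the constant $K$ in the hypothesis to match the bilipschitz constant), and by assumption every such pattern embedding is conformal. Hence $L = Df_p$ is a similarity (scalar multiple of an orthogonal map) at almost every $p \in U$. Denote the scale factor by $\lambda(p)$. Since $f|_F$ is $K$-bilipschitz, $\lambda$ is bounded above and below; and since the differential exists and is conformal a.e., a standard argument (e.g.\ pairing two directions and using continuity of $f$, or directly applying the fact that a bilipschitz map with a.e.\ conformal derivative on a connected Euclidean domain is a similarity, which uses Liouville's theorem for $d \ge 3$ and the holomorphic/antiholomorphic dichotomy for $d=2$) forces $\lambda$ to be a constant $\lambda_0$ and $f|_U$ to be the restriction of a global similarity $F \to \R^d$.

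Finally, since $U$ is dense in $F$ and $f$ is continuous, $f|_F$ agrees with this similarity on all of $F$. Its image is therefore an isometrically embedded copy of $\R^d$ in $Y$, which, being the image of a flat and lying in the union of finitely many flats supplied by Proposition \ref{quasiflats}, must itself be a single flat of $Y$. Thus $f$ sends every flat to a flat and acts as a similarity on each, completing the proof. The delicate point, as noted, is the walls-to-walls argument in the second paragraph: one must verify that the branching flats needed to detect each Weyl hyperplane of $F$ at $p$ can be chosen so that $p$ lies in the good set $U$ for all of them simultaneously, which is where the codimension $2$ control in Proposition \ref{quasiflats} is essential.
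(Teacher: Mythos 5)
Your proof follows the paper's argument essentially step for step: restrict to a flat, use Proposition \ref{quasiflats} to get a good set off finitely many codimension-two exceptional subflats, apply Rademacher, detect the Weyl hyperplanes via transverse branching flats to see that the derivative is a linear pattern embedding at a.e.\ point, invoke the quasi-conformal-implies-conformal hypothesis, and finish with Liouville/Gehring rigidity together with the fact that the codimension-two set $\Sigma$ cannot disconnect. The only cosmetic difference is that the paper cites Gehring's theorem and the finiteness of isometric pattern embeddings to pin down the constant similarity where you appeal to Liouville and the bilipschitz bound directly; the content is the same.
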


\begin{lemma}\label{lemma:hyperplanes} Let $\phi$ be an embedding as above and $F$ a flat in $X$.   For any hyperplane $H$ of $F$, almost every point $x \in H$ has a neighborhood mapped to a hyperplane of $Y$. \end{lemma}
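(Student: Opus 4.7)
The plan is to deduce the conclusion by comparing $\phi(F)$ with the image of an auxiliary flat $F''$ of $X$ that branches off from $F$ along $H$. Concretely, I choose $F''$ to agree with $F$ on exactly one of the two closed halfspaces of $F$ bounded by $H$, say $F_+$, and to differ from $F$ on the other side; such an $F''$ exists in any thick Euclidean building, and in particular in the buildings and symmetric spaces under consideration. Apply Proposition \ref{quasiflats} to each of the bilipschitz embeddings $\phi|_F$ and $\phi|_{F''}$ to obtain two finite collections of codimension-two exceptional subflats in $Y$. Because $\phi$ is $K$-bilipschitz, the preimages in $F$ of these subflats have Hausdorff dimension at most $d-2$, and hence meet $H$ in a set of $(d-1)$-dimensional measure zero; I restrict attention to points $x \in H$ in the complement.

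For such $x$, there exist $\epsilon>0$ and flats $F',F''_Y$ of $Y$ with $\phi(B(x,\epsilon) \cap F) \subset F'$ and $\phi(B(x,\epsilon) \cap F'') \subset F''_Y$, both containing the bilipschitz image of the half-ball $B(x,\epsilon) \cap F_+$. The key step is to argue that $F' \neq F''_Y$. Suppose for contradiction they coincide. By invariance of domain, the bilipschitz images $\phi(B(x,\epsilon)\cap F)$ and $\phi(B(x,\epsilon) \cap F'')$ are each open in $F' \cong \R^d$, so their intersection is open in $F'$. However, injectivity of $\phi$ identifies this intersection with $\phi(B(x,\epsilon) \cap F_+)$, which is a bilipschitz image of a closed halfball and hence has nonempty boundary near $\phi(x)$; this contradicts openness.

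Thus $F' \neq F''_Y$ are distinct apartments of $Y$ sharing a $d$-dimensional half-disk. Since the intersection of two apartments in a Euclidean building is a convex intersection of closed Weyl halfspaces of each, the boundary of their common region near $\phi(x)$ lies in a single Weyl hyperplane $H_Y$ of $F'$. Hence $\phi(H \cap B(x,\epsilon))$, being the image of the boundary of $F_+$ inside $F'$, is contained in $H_Y$, and by bilipschitz control it has positive $(d-1)$-dimensional Hausdorff measure, so it fills an open subset of $H_Y$. The main obstacle is the inequality $F' \neq F''_Y$: without it one cannot prevent the two sheets of $\phi(F) \cup \phi(F'')$ from collapsing into a single apartment of $Y$, and it is precisely here that the injectivity of $\phi$ and the flatness of apartments in $Y$ both enter in an essential way.
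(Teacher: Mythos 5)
Your argument takes a genuinely different route from the paper's. The paper chooses a flat $F'$ with $F \cap F' = H$ (transverse intersection), observes that $\phi(H)=\phi(F)\cap\phi(F')$ is locally contained in the closure of a finite union of Weyl hyperplanes of the ambient flat containing $\phi(F)$ near $\phi(x)$, and then invokes Rademacher differentiability of $\phi|_F$ at $x$ to collapse that finite union to a single hyperplane. You instead take $F''$ sharing the half-space $F_+$ with $F$ and run a purely topological argument (invariance of domain plus injectivity) to show the two image flats are distinct, then use the structure of apartment intersections as Weyl boxes. This avoids Rademacher entirely at this stage (the paper needs differentiability again in Proposition \ref{flatstoflats} regardless), and your distinctness step is correct: if $F'=F''_Y$, then $\phi(B\cap F_+)=\phi(B\cap F)\cap\phi(B\cap F'')$ would be open in $F'$, yet its preimage $B\cap F_+$ under the homeomorphism $\phi|_{B\cap F}$ onto its open image is not open in $B\cap F$.

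Two steps need more care. First, you assert that $\phi(H\cap B)$ lies in the boundary of $P=F'\cap F''_Y$ because it is ``the image of the boundary of $F_+$''; but $P$ is determined by the two flats of $Y$ and could a priori strictly contain $\phi(B\cap F_+)$ near $\phi(x)$, so $\phi(z)\in\partial P$ is not automatic for $z\in H$. It is true, but you must rerun the openness argument: if $\phi(z)$ were interior to $P$, a small neighborhood $V$ of $\phi(z)$ inside $P$ is open in both $F'$ and $F''_Y$ (invariance of domain again), and then $V\cap\phi(B\cap F)\cap\phi(B\cap F'')$ is a nonempty open subset of $\phi(B\cap F_+)$ containing $\phi(z)$, contradicting the non-openness you already established. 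Second, the boundary of the Weyl box $P$ near $\phi(x)$ lies in a single Weyl hyperplane only when $\phi(x)$ avoids the codimension-two faces of $P$; at a corner the boundary is a union of several hyperplanes and your conclusion fails at that point. This is another measure-zero exclusion (the codimension-two skeleton of $\partial P$ is null for $(d-1)$-measure and $\phi|_H$ is bilipschitz, so its preimage is null in $H$), but since $P$ depends on $x$ you need a short countable covering argument over the balls $B(x,\epsilon)$ to recover the ``almost every $x\in H$'' statement. With these two patches your proof goes through.
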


\begin{proof} By Proposition \ref{quasiflats}, we know $\phi(F)$ is locally contained in a flat off of a finite set of codimension 2 subflats.     Let $\Sigma \subset F$ be the pre-image of these exceptional  subflats.    For any $x \in F \setminus \Sigma$, $\phi$ is locally a map into a single flat.    By Rademacher's theorem, $\phi$ is differentiable at almost every point in such a neighborhood, with derivative $K$-quasi-conformal linear map.   Since these neighborhoods cover $F \setminus \Sigma$, this holds at almost every point of $F \setminus \Sigma$.

Let $x$ be such a point of differentiability and $H$ a hyperplane of $F$ through $x$, let $y = \phi(x)$ and  $G$ be a flat in $Y$ which locally contains $\phi(F)$ near $y$.   Choose $F'$ a flat in $X$ which intersects $F$ in $H$.      The image of $F'$ is also a bilipschitz image of a flat, and so has the structure given by Proposition \ref{quasiflats}.    We have $\phi(H) = \phi(F) \cap \phi(F')$.    Off of the exceptional points of $\phi(F')$ this is locally the intersection of two flats which are transverse as $F$ and $F'$ are.    Thus $\phi(H)$ is (locally) contained in the closure of a finite union of hyperplanes in $G$. By the differentiability of $\phi$ at $x$, $\phi(H)$ much locally be equal to a single such hyperplane. \end{proof}

\begin{proof}[Proof of Proposition \ref{flatstoflats}] We now finish the proof of Proposition \ref{flatstoflats}.    As in the proof of the lemma, let $x$ be a point of differentiability of $\phi$ in $F \setminus \Sigma$.   Lemma \ref{lemma:hyperplanes} implies the derivative is a linear embedding of patterns at $x$, and so is 1-quasiconformal.    As this holds almost everywhere, work of Gehring implies $\phi$ is smooth on $F \setminus \Sigma$.      By the assumed rigidity of patterns, the derivative at each point is a scalar multiple of one of a finite number of isometric pattern embeddings, where the scalar is bounded between $\frac{1}{K}$ and $K$.   Since $\Sigma$ cannot disconnect as it is codimension at least two and the map is $C^1$ off of $\Sigma$, we know that off of $\Sigma$ we, in fact, have a scalar multiple of a single isometry.
Comparing the scaling along different lines in the pattern, we see that, possibly after re-scaling the metric on $Y$, that $\phi$ is isometric on $F \setminus \Sigma$.   By continuity, $\phi$ is therefore an isometry on all of $F$.   This forces the image to be a single flat.  A similar use of Gehring's theorem occurs in \cite{D}.  As all of our maps are not just quasi-conformal but bilipschitz, we are really using an easy special case of this theorem.\end{proof}

\subsection{Proof of rigidity results}

 We know prove our main rigidity reslts from the results of the last section.   First, we conclude the analogue of our rigidity results for bilipschitz embeddings of buildings:

\begin{theorem}\label{buildingtheorem} Let $X$ and $Y$ be buildings of equal rank.   If there is a bilipschitz embedding of $X$ in $Y$ then there is a linear embedding of their Weyl chamber patterns.   If all such pattern embeddings which $K$-quasi-conformal are conformal, then all $K$-bilipschitz embeddings of $X$ in $Y$ are, up to rescaling, isometric embedding of subbuildings. \end{theorem}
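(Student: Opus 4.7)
For part (1), the plan is to reduce to the linear statement by differentiation along a single flat. Given a $K$-bilipschitz embedding $\phi:X\to Y$ and a flat $F$ in $X$, Proposition \ref{quasiflats} tells us that off a codimension-two subset $\Sigma\subset F$, the image $\phi(F)$ is locally contained in a single flat of $Y$. Rademacher's theorem applies on each such local piece, so $\phi|_F$ is differentiable almost everywhere on $F\setminus\Sigma$. At any such point of differentiability $x\in H$ for a Weyl hyperplane $H$ of $F$, we repeat the hyperplane argument used in the lemma preceding Proposition \ref{flatstoflats}: pick a flat $F'$ in $X$ meeting $F$ transversally along $H$, apply Proposition \ref{quasiflats} to $F'$, and observe that $\phi(H)=\phi(F)\cap\phi(F')$ is locally contained in a finite union of Weyl hyperplanes of the image flat; differentiability at $x$ forces the image direction to be a single such hyperplane. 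Consequently, the derivative $d\phi_x$ is a linear map carrying the Weyl pattern of $F$ into the Weyl pattern of the target flat, i.e.\ the desired linear embedding of Coxeter systems.

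For part (2), Proposition \ref{flatstoflats} is now directly applicable under the conformal hypothesis: every flat $F$ of $X$ is sent isometrically, up to a scalar $\lambda_F$, onto a flat $\phi(F)$ in $Y$. The next step is to show that $\lambda_F$ does not depend on $F$. If $F_1$ and $F_2$ are two flats of $X$ sharing a positive-dimensional subflat $S$, then $\phi|_{F_1}$ and $\phi|_{F_2}$ agree on $S$, so $\lambda_{F_1}=\lambda_{F_2}$. Since any two flats in an (irreducible) Euclidean building of rank at least two can be joined by a chain of flats in which consecutive members share at least a Weyl hyperplane, the constant $\lambda$ is global. After rescaling the metric on $Y$ by $\lambda^{-1}$, the map $\phi$ is isometric on every flat of $X$.

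It remains to identify the image as a subbuilding. The plan is to take $\phi(X)$ together with the family of flats $\{\phi(F):F\text{ a flat of }X\}$ as the candidate set of apartments. Two such apartments intersect in $\phi(F_1\cap F_2)$, which is a Weyl-convex subset of each; the Coxeter structure inherited from the pattern embedding makes each $\phi(F)$ a Euclidean Coxeter complex of the appropriate type; and the building axioms for $\phi(X)$ (existence of an apartment through any two points, existence of compatible retractions) transfer directly from those of $X$ via the isometry $\phi$. Composing with the rescaling gives the isometric embedding of subbuildings claimed.

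The main obstacle is the last paragraph: verifying that the image really satisfies the building axioms rather than being only a metric union of apartments. The delicate point is checking compatibility of retractions near codimension-one intersections of image flats, since these must branch in the Weyl-compatible manner dictated by the building structure of $Y$; one expects this from the fact that $\phi$ is already known to send Weyl hyperplanes to Weyl hyperplanes, but making it precise will require a local matching of wall combinatorics between $X$ and $\phi(X)\subset Y$.
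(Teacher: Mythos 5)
Your argument for part (1) and for the rescaling step of part (2) is essentially the paper's: part (1) comes from Rademacher plus the hyperplane lemma (the derivative at a generic point of a flat is a linear pattern embedding), and the global dilation constant comes from chaining flats through positive-dimensional intersections, exactly as in the paper.

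Where you diverge is the final identification step, and this is where your proposal stalls. You propose to exhibit $\phi(X)$ as a building by declaring $\{\phi(F)\}$ to be its apartment system and verifying the building axioms directly, and you correctly flag that checking compatibility of retractions and wall combinatorics is the hard part --- you do not carry it out. The paper sidesteps this entirely. Once each flat maps isometrically (after rescaling), the building axiom that any two points of $X$ lie in a common apartment, together with the fact that apartments are convex and isometrically embedded, immediately gives that $\phi$ preserves distances globally; and since every geodesic lies in a flat, the image of a geodesic is a geodesic, so $\phi$ is a totally geodesic isometric embedding. That is all Theorem \ref{buildingtheorem} is used for. The identification of the image as a genuine subbuilding is deferred to the proof of Theorem \ref{rigidityThm}, where it is obtained not by re-verifying the axioms but by observing that the isometric pattern embedding induces an inclusion of Weyl groups $W' < W$, so the charts of $X$ push forward to $W$-compatible charts in $Y$, and then quoting \cite[Theorem 3.1]{KL2} to realize the boundary as that of a subbuilding or sub-symmetric space. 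If you want to complete your version, you should either adopt this route or supply the retraction-compatibility argument you describe; as written, the last paragraph is a stated plan rather than a proof, and it is the one step that does not reduce to facts you have already established.
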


\begin{proof}
The first conclusion is an immediate consequence of Lemma \ref{lemma:hyperplanes}.  For the second, 
by Proposition \ref{flatstoflats}, each flat in $X$ maps by a similarity to a flat in $Y$.   Since the dilation must be equal on two flats intersecting in anything of positive dimension, and all flats can be connected by such a chain of flats, we can rescale the metric on $Y$ so that flats map isometrically to flats.    Since every geodesic is contained in a flat, the map is globally isometric and totally geodesic. \end{proof}

The proof of our main rigidity result requires one really new ingredient, the main technical result of \cite{KL2}.

\begin{proof}[Proof of Theorem \ref{rigidityThm}]

Given a quasi-isometric embedding of $X_1$ into $X_2$ satisfying the condition of the first bullet point in Theorem \ref{rigidityThm}, by passing to the asymptotic cones, we obtain a bilipschitz embedding of buildings. By Theorem \ref{buildingtheorem} this is impossible as there are no linear embeddings of their Coxeter systems. If $X_1$ and $X_2$ satisfy the conditions of the second bullet point in Theorem \ref{rigidityThm}, then the map on asymptotic cones is an isometric embedding by Theorem \ref{buildingtheorem} and so in particular, every flat is mapped to a single flat. By \cite[Lemma 7.1.1.]{KL}, the fact that flats go to individual flats in all asymptotic cones implies that every flat maps to within bounded distance ( depending only on the quasi-isometry constants ) of a single flat.     Since intersections of flats encode the Weyl chamber patterns, we know that the Weyl hyperplanes map to within uniformly bounded distance of Weyl hyperplanes.   These affine foliations are quite rigid - in particular, by \cite[Lemma 7.1.12]{MSW2}, the quasi-isometry is at bounded distance from an affine map preserving patterns.     Thus every apartment in the Tits boundary of the domain maps to a well defined apartment in the Tits boundary of the domain.  Further, this maps respects the decomposition into chambers.   It is now easy to check that the image is a sub-building: the isometric embedding of patterns induces an inclusion of Weyl groups $W' < W$, and charts from the domain building structure push forward to charts in the range which are $W'$ compatible and therefore $W$ compatible. By \cite[Theorem 3.1]{KL2} it is the boundary of a sub symmetric space (or building) $Y' \subset Y$, and the image of the quasi-isometric embedding is at bounded distance from $Y'$.    So the embedding is a quasi-isometry $X$ to $Y'$, and is therefore at bounded distance from an isometry by the main results of either \cite{EF} or \cite{KL}.  (We quote these last results only for simplicity of exposition.  At this point, we know much more, e.g. that the quasi-isometry from $X$ to $Y'$ is bounded distance from an isometry along each flat, so one can use only a small part of the arguments from those papers.) \end{proof}

\section{Pattern embeddings}\label{patterns}

In this section we do the necessary algebraic analysis to deduce Corollary \ref{corollary:classification} from Theorem \ref{rigidityThm}.  Since it does not require too much more work, we also analyze essentially all settings in which $AN$ maps are possible, at least when the range is classical.

The geometric information needed in our rigidity results (and the algebraic information needed for our constructions of exotic embeddings via $AN-$maps) is closely related to the restricted root systems of  semi-simple groups over $\R$.    Let $X$ be a symmetric space or Euclidean building of rank $r$.   The maximal flats are $r$-dimensional Euclidean vector spaces (canonically identified with $A$).    Each such flat has a finite collection affine foliations, along which the flat meets other maximal flats transversely.   These are precisely the level sets of the roots.

One results about $QI$-embeddings say that if $X_1$ quasi-isometrically embeds in $X_2$ ( of equal rank ) then there is a linear isomorphism $A_1 \to A_2$ which preserves these foliations, and that if we know all such linear maps are conformal then the $QI$-embedding is at bounded distance from isometric.    Using the metric on the flats we can used duality to work in terms of the root vectors rather than the hyperplanes (which are their perpendiculars).  In the following, we study pattern preserving maps of flats using this duality.

\subsection{Pattern Embeddings}

\begin{defn} By a {\bf pattern embedding} of a root system on $R_1$ in a root system on $R_2$ we mean a linear embedding of the underlying vector spaces which sends the root vectors of $R_1$ to scalar multiples of the roots of $R_2$.     We say the pattern embedding is {\bf equal rank} if the root systems are equal rank (and so the embedding is induced by an isomorphism of vector spaces), and that the embedding is {\bf conformal} if the linear map is a conformal embedding. \end{defn}

We want to discuss in more detail when a non-conformal, equal rank pattern embeddings exists of one root system in another.    We analyze the reducible case in terms of irreducible components, so we begin by studying general pattern embeddings in the irreducible case.  Because we are using the dual notion of pattern, our results on pattern embeddings in unequal rank do not say anything about linear embeddings sending affine foliations to affine foliations.

In rank two, which is somewhat special, we are simply discussing patterns of lines in the plane.    The  irreducible patterns are $A_2$, $BC_2$, and $G_2$ - which are, respectively: three, four, and six lines arranged symmetrically, and the reducible pattern $A_1 \times A_1 = D_2$ which is two perpendicular lines.    It is straight forward to check that there are pattern embeddings for any pair where the range has at least as many lines as the domain.       These rank two embeddings will be the building blocks for our study of the higher rank cases.      To this end we note that there are three basic non-conformal embeddings : $A_2$ into  $BC_2$, $D_2$ into $A_2$, and $D_2$ into $BC_2$ (for this last case there are two distinct embeddings, one conformal and one not).     We have excluded the case of target $G_2$ as it does not occur inside any higher rank irreducible root system.

In rank $n > 2$ there are three families of irreducible patterns:  $A_n$, $BC_n$, $D_n$ (this only for $n>3$ as $D_3 = A_3$ and $D_2$ is reducible ), plus the exceptional $E_6$, $E_7$, $E_8$, and $F_4$ (see, for example,  \cite{Knapp}, for a detailed description of these exceptional root systems).     For reference, the classical root systems are:

\begin{enumerate} \item $A_n$ for $n\geq 2$ with $V=\{(x_0,\cdots,x_n) \in \R^{n+1}: x_0 + \cdots + x_n =0\}$ and roots $x_i - x_j$ . \item $B_n$ for $n \geq 2$ with $V = \R^n$ and roots $\{x_i  \pm x_j\} \cup \{\pm x_i\}$. \item $C_n$ for $n \geq 2$ with $V = \R^n$ and roots $\{x_i  \pm x_j\} \cup \{\pm 2x_i\}$. \item $BC_n$ for $n \geq 2$ with $V = \R^n$ and roots $\{x_i  \pm x_j\} \cup \{\pm x_i\} \cup \{\pm 2x_i\}$. \item $D_n$ for $n \geq 4$ with $V = \R^n$ and roots $\{\pm x_i  \pm x_j\} $. \end{enumerate}

While the root systems $B_n$, $C_n$, and $BC_n$ are all distinct, they all have the same root patterns.   We will refer to this pattern, somewhat sloppily, as $BC_n$.

Given a root system on $V$, a two dimensional subspace $U \subset V$ is {\bf special} if its intersection with the roots is irreducible (so $A_2$ or $BC_2$, as $G_2$ does not occur inside any irreducible root systems other than itself).    Two roots whose span is special we will call {\bf related}.   A root system all of whose special subspaces are of type $A_2$ is called simply laced (these are the $A$, $D$, and $E$ families).     As above, simple counting the roots in various special planes shows that a pattern that embeds in a simply laced system is itself simply laced.

We need some facts about special planes in root systems.   We will call a special plane {\bf full} if it is of type $A_2$ in a simply-laced root system or of type $BC_2$ in doubly-laced root system.  (All root systems except $G_2$ which are not simply laced are doubly laced, $G_2$ is triply laced, but we will not need this, or the usual definitions of these concepts, here.)

\begin{lemma} If $R$ is an irreducible root system then \begin{itemize} \item Any two full planes are equivalent under the action of the Weyl group. \item For any two roots $r$ and $r'$ there is a sequence of roots $r=r_0, r_1, \cdots , r_n=r'$ such that the span
    of $r_i$ and $r_{i+1}$ is a full special plane for all $i$.
\end{itemize} \end{lemma}

\begin{proof}

Let $U \subset V$ be a special plane.  Since $R$ is finite, we can choose a linear functional on $V$ which vanishes on $U$ but not on any root outside $U$.   By perturbing this functional we can find a functional which is non-zero on all roots and whose values on roots in $U$ is much smaller than its values on any other roots.     Using this functional to define the positive roots, we see that the  positive roots in $U$ are generated by the positive simple roots in $U$.   Thus $U$ comes from an edge of the corresponding Dynkin diagram.   It is standard that the Weyl group is transitive both on positive systems and on simple edges of the Dynkin diagram  spanned by roots of a fixed length \cite{Knapp}.   Since any diagram contains at most one doubled edge, the first part of the lemma follows.

For the second part, we first consider the weaker equivalence generated by all special planes.    Since two roots which are not orthogonal necessarily span a special plane, the equivalence classes give a decomposition of $V$ into orthogonal factors, with every root contained in a factor.   Thus irreducibility implies there is only one factor.

This completes the second clam for simply laced root systems.  For doubly laced systems we need to see that two roots in a common special plane of type $A_2$ are connected by a chain of full special planes.    The first paragraph above shows that there are at most two Weyl orbits of such planes (one spanned by short roots and one by long roots),  and it is trivial to check the claim for one such plane of each type in both $BC_n$ and $F_4$.

\end{proof}

\begin{lemma} A pattern embedding of irreducible root patterns that maps all special planes to special planes of the same type is conformal. \end{lemma}

\begin{proof}

The starting point is the observation made above that such a map is conformal on all special planes.   Thus the metric is scaled by the same factor along any two related root lines in the domain.   By irreducibility this means the map scales all root lines by the same factor.    The metric on $V$ is determined by its restrictions to the root lines (this is a linear algebra exercise for $A_n$ and $D_n$, and all the other root systems are supersets of these).

\end{proof}

\begin{corollary}  Every pattern automorphism of an irreducible pattern is conformal. \end{corollary}

This can also be shown directly : after composing with an element of the Weyl group, the automorphism preserves the positive cone, and therefor the simple roots.    Since it must also preserve the types of the special plane, this means it corresponds to an automorphism of the Dynkin diagram ( ignoring the direction of multiple edges).    These are isometric automorphisms of the root systems if the directions are preserved ( automatic except for $BC_2$, $G_2$, and $F_4$ which have a conformal pattern automorphism switching the long and short roots ).

\begin{lemma}  Let $P$ and $P'$ be irreducible root systems of equal rank (excluding $G_2$).  If $P$ and $P'$ are both simply laced, or neither is, then every embedding of $P$ in $P'$ is conformal. \end{lemma}

\begin{proof}

If both are simply laced, this is an immediate corollary of the previous lemma. The argument when neither are is the same as there once one notes that for non simply laces root systems, irreducibility implies than any two roots are connected by a chain of special planes of type $BC_2$.

\end{proof}

Conformal embeddings corresponding to inclusions of sub root systems.  Up to symmetries, these are all the obvious inclusions of one root system as a subset of another in the coordinates above, which we call {\bf standard} (\cite{Knapp}).  Each root system includes canonically into the higher rank root systems in the same family, and $A_{n-1} \subset D_n \subset BC_n$.    Here the isomorphism of $A_3$ and $D_3$ causes some confusion, as the two inclusions into $BC_n$ for $n>3$ are distinct.

By the above lemmas, the only  possibilities for  non-conformal pattern embedding above rank two are one of $A$, $D$ or $E$  into one of  $BC$ or $F$.    We will put off a discussion of the exceptional root systems until later, and first address the classical examples:

\begin{lemma}\label{weird} There are, up to the actions of the Weyl groups (and rescaling),  two non-conformal pattern embeddings of $A_n$ into $BC_m$ ( for $m \geq n$) :

\begin{itemize}

\item The roots $x_i - x_j $ for $i>j>0$ map to $y_i - y_j$, and the roots $x_i - x_0$ map to  $y_i$. \item The
    roots $x_i - x_j $ for $i>j>0$ map to $y_i - y_j$, and the roots $x_i - x_0$ map to $y_i + y_1$ (meaning
    the line to $2y_1$ when $i=1$).

\end{itemize} \end{lemma} \begin{proof}

Since the embedding is non-conformal, the prior lemmas imply that there is some special plane in $A_n$ which maps to one of type $BC_2$ in $BC_n$.     The Weyl groups are transitive on special planes in the domain and on type $BC_2$ planes in the range.      Thus we may assume that the plane spanned by $x_1-x_0$ and $x_2-x_1$ maps to the plane spanned by $y_1$ and $y_2$ in the range.      Up to automorphism there are two such maps, depending on whether the missing root line is one of the $y_i$ or $y_1 \pm y_2$.      Thus we may arrange for the map to behave as indicated for the roots $x_i - x_j$ with $ 2 \geq i > j $.

Assume, for induction, that we can apply an automorphism to put the map in one of the two standard forms for all the roots $x_i - x_j$ with $ k \geq i > j $.    The root $x_{k+1} - x_k$ is related to all the roots $x_k - x_i$ for $k > i$, so its image must be related to $y_k - y_i$ for all $ k > i \geq 1$ and to either $y_k$ or $ y_k + y_1$ respectively.  It must also be outside the span of those roots because $x_{k+1} - x_k$ is in the domain.    The only such roots are $\pm y_k \pm y_s $ for $s>k$.     By applying an automorphism of $BC_n$ fixing the first $k$-coordinates, we may assume $s = k+1$ and that $y_{k+1}$ occurs with positive sign.    So $x_{k+1} - x_k$ maps to $\lambda (y_{k+1} \pm y_k)$ with $\lambda >0$.

Since $(x_{k+1} - x_k) + (x_k - x_i) = x_{k+1} - x_i$ is a root in the domain for all $0< i <k $, the image must be a scalar multiple of a root.      By induction and linearity, the image is $\lambda y_{k+1} + (1 \pm \lambda) y_k - y_i$.     Since no roots in the range have three non-zero coefficients and $\lambda >0$, we must have $x_{k+1} - x_k$ mapping to $y_{k+1} - y_k$ as claimed.

\end{proof}

\begin{lemma} Every pattern embedding of $D_n$ (for $n\geq 4$) into $BC_m$ is conformal (and hence standard) \end{lemma} \begin{proof}

As before, a non-conformal embedding must send some special plane to one of type $BC_2$.    The Weyl group of $D_n$ are transitive on special planes, so we may assume one of the planes mapping non-conformally occurs in the canonical $A_{n-1}$ inside $D_n$.     By the remark above, this means the pattern embedding can be taken to be one of the two above maps sending $A_{n-1}$ into $BC_{n-1} \subset BC_n$.    Explicitly, the map does one of the following:

\begin{itemize}

\item The roots $x_i - x_j $ for $i>j>1$ map to $y_i - y_j$, and the roots $x_i - x_1$ map to  $y_i$ (for $i>1$).
    \item The roots $x_i - x_j $ for $i>j>1$ map to $y_i - y_j$, and the roots $x_i - x_1$ map to $y_i + y_2$ for
    $i>1$
    (meaning the line to $2y_2$ when $i=2$).

\end{itemize}

Consider the root $x_1 + x_2$.   It is not in the span of $A_{n-1}$ so its image must have nontrivial $y_1$ component.    It must also be related to all $y_i - y_2$ the images of $x_i - x_2$ ( for all $i>2$).  The only such roots are $\pm y_1 \pm y_2 $.   However, none of these are related to the images of all the  $x_i - x_1$ for $i>2$, so no such pattern embeddings exist.

\end{proof}

For the exceptional root systems the conformal examples are all standard.   Since $B_4$ and $C_4$ are both isometric sub root systems of $F_4$, there are non-conformal embeddings (and even $AN$-maps) from $A_4$ into $F_4$.    Likewise, there is a unique non-conformal embedding of $D_4$ into $F_4$, via a straightforward calculation as above starting with $A_3 \subset D_4$ into $B_3$ or $C_3$ inside $F_4$.    For completeness we give it here on a basis (the rest is determined by linearity):

\begin{itemize}

\item $x_2 - x_1 \mapsto y_1$ \item $x_3 - x_2 \mapsto y_2$ \item $x_4 - x_3 \mapsto \frac{1}{2}(y_4 + y_3 - y_2 -
    y_1)$ \item $x_1 + x_2 \mapsto \frac{1}{2}(y_4 + y_3 + y_2 - y_1)$

\end{itemize}

This gives a complete picture of pattern embeddings between irreducible root systems.     We next turn to the cases where the domain is reducible.    For simplicity, we will only discuss the classical root systems here as the other work out similarly.   Likewise we will only consider the case of the domain as a product of two factors as those with more can all be built by compositions of such.

 Suppose $R = R_1 \oplus R_2$ with each $R_i$ irreducible.   To embed  $R$ into $R'$  of equal rank one needs
 embeddings of both $R_i$ into $R'$ so that the image vector spaces $V_i$ decompose $V'$ as an internal direct sum.
 We will call the embedding {\bf standard} if it is conformal on both of the components  and their images are
 orthogonal.

The earlier lemmas tell us what the image subspaces can look like:

\begin{itemize} \item $A_n$ includes in $D_{n+1}$ naturally as the root vectors with coordinate sum zero.   Composing with signed
    permutations of the coordinates of $D_{n+1}$ one can get an embedding of $A_n$ with image the hyperplane
    $\Sigma
    \varepsilon_i y_i = 0$ for any $\varepsilon_i = \pm 1$.
\item The previous embedding can be composed with the inclusions of $D_{n+1}$ into $D_m$ or $BC_m$ for $m>n$.   The
    image subspace is a hyperplane as above in the subspace spanned by $n+1$ of the coordinates.   These are all
    the
    conformal embeddings of $A_n$ ( for $n>3$ ) into these spaces.
\item $A_n$ includes into $BC_n$ non-conformally (in two ways) which is equal rank.    This can be composed with
    the
    inclusions of $BC_n$ into $BC_m$ for $m>n$.   The image subspaces are the span of $n$ coordinates.   These are
    all of the non-conformal embeddings of $A_n$ into $BC_m$.
\item $D_n$ includes into $BC_n$ conformally and equal rank.    This can be composed with any inclusion $BC_n$ into
    $BC_m$.   The image is the span of $n$ coordinates.  These are all the pattern embeddings $D_n$ into $BC_m$
    (note that for $n=3$ this is also an embedding of $A_3$ into $BC_m$ not covered by the earlier cases ).
\item  For $n \leq m$, $A_n$ includes into $A_m$, $D_n$ into $D_m$, and $BC_n$ into $BC_m$.    These are conformal
    with image the span of $n$-coordinates.
\end{itemize}

The upshot is that the image subspace when the domain is rank $n$ is always either the span of $n$-coordinates or a hyperplane of the specified type in the span of $n+1$-coordinates, and the latter happens only if the domain pattern is $A_n$.

Consider an embedding $R_1 \oplus R_2$ into $R'$ of equal rank.    If both factors have images of the first type ( the span of a subset of coordinates of size equal to the rank of $R_i$ inside $D_m$ or $BC_m$ )   then these subsets of coordinates must be complementary to get $V' = V_1 \oplus V_2$.       This exhausts the possibilities when no factor is of type $A$, and in these cases all the embeddings are conformal.     It also includes all the cases where $A_n$ only embeds non-conformally, and all such embeddings are simply products of embeddings of irreducible composed with one of the conformal embeddings of products of $D$ and $BC$ factors into $D$ or $BC$.

When the target is type $A$, the factors must be so as well.    This case is non-conformal :

\begin{lemma}  There are equal rank, non-standard pattern embeddings of $A_m \oplus A_n $ into $A_{m+n}$. \end{lemma}

\begin{proof}

Divide the $m+n+1$ coordinates for $A_{m+n}$ into two sets, one of size $m+1$ and on of size $n+1$, which overlap in a single element.     The natural inclusions of $A_n$ and $A_m$ as the spans of these are conformal embeddings of $A_n$ and $A_m$.     Their images are disjoint ( as the intersection consists of vectors with coordinate sum zero supported in a single coordinate ) and together span.

\end{proof}

Similarly,

\begin{lemma}  There is a non-standard pattern embedding $A_n \oplus A_m$ into $D_{m+n}$. \end{lemma}

\begin{proof}

The embeddings of the factors are given by $A_k$ includes in $D_{k+1}$ as the vectors of coordinate sum zero, composed with an isometric embedding of $D_{k+1}$ into $D_{m+n}$.      The image of the underlying vector space is therefor a hyperplane in the span of $k+1$ coordinates defined by $ \Sigma \pm y_i = 0$ for some collection of signs.   If the two sets of coordinates overlap in fewer than two coordinates then the image isn't full rank.    If the overlap in more than two then they are not disjoint.      Thus the only embedding comes from overlapping in exactly two coordinates ( say $y_1$ and $y_2$ ) and where the sign choices agree for one of these two and differ for the other.      Explicitly, after signed permutations, it must be:

$$(x_0,  x_1, \cdots, x_n) \times (z_0,z_1,\cdots,z_m)  \mapsto (x_0 - z_0,x_1 + z_1, x_2,  \cdots , x_n, y_2, \cdots, y_m)$$

\end{proof}

In the same way we get  non-standard pattern embeddings $A_n \times D_m$ into $D_{n+m}$ and $A_n \times BC_m$ into $BC_{n+m}$ with all factors conformal.    Up to compositions, we now have a complete list.   To be more precise,  we say that a pattern embedding is {\bf maximal} if it only factors as a composition of two embeddings trivially (meaning that one embedding is actually an isomorphism).   The above constructions together with the list of possible image subspaces for the irreducible embeddings give a complete list.

\begin{corollary} The maximal, equal rank, non-standard pattern embeddings among products of the classical root systems ( with irreducible range) are: \begin{itemize} \item $D_2 = A_1 \times A_1$ into $A_2$ \item $A_n$ into $BC_n$ (in two ways for $n>2$) \item $A_n \times A_m$ into $A_{n+m}$ \item $A_n \times A_m$ into $D_{n+m}$ \item $A_n \times D_m$ into $D_{n+m}$ \item $A_n \times BC_m$ into $BC_{n+m}$ \end{itemize} \end{corollary}

In all of the reducible cases the factors embed conformally but the images are not orthogonal.

\begin{proof}

All listed embeddings have been constructed above.     It remains to show the list is complete.   This is a direct consequence of the classification of irreducible embeddings above and the discussion there of what the image vector spaces can be.     First the case when the image is type $A$:

The factors are all type $A$ and embedded conformally.   Let the target be $A_r$, which we view in the standard way as a subset of vectors in $\R^{r+1}$ with coordinate sum zero.    Each factor has image the vectors lying in the span of a subset of the coordinates.      Two such sets cannot overlap in more than one coordinate or their intersection would contain a root vector.    If two overlap in exactly one factor then we have two factors embedding as in the lemma.   Thus our embedding is a composition of two nontrivial embeddings unless these are the only two factors.       The last possibility is that all factors land in non-overlapping sets of coordinates.    In this case the span of the images is a subspace of $\R^{r+1}$ of co-dimension equal to the number of factors.   Since we are assuming equal rank, this implies there is only one factor and our map is therefore an isomorphism (and, in particular, conformal).

When the target is $D_n$ or $BC_n$ we argue similarly, but things are simpler as the roots span the full $\R^n$ in the stand coordinates for these root spaces.     Again, the image each irreducible factor is either the span of a subset of the coordinates, or a hyperplane in such.     Any two images of the first type must map to disjoint sets of coordinates (as they must be disjoint subspaces).    The map thus factors through the map that combines these factors first, hence is not maximal unless there are only the two factors.     Since the map is non-conformal while these image spaces are orthogonal, at least one of the factors must be $A_k$ embedding non-conformally into $BC_k$, but then again the map is non-maximal.

Thus there is at most one factor of the first type.   Similarly, the argument used above for $A_n$ targets gives that maximality implies there is only one of the second type as well.     These are precisely the cases of one $A_n$ factor, the last three on the list.

Finally, suppose there are no factors of the first type and two of the second.    This is then an embedding of $A_n \times A_m$ into either $D_{n+m}$ or $BC_{n+m}$.    The former is the third listed embedding.     Since the images are assumed of the hyperplane type, the classification of embeddings of $A$ into $BC$ says the image lands in $D \subset BC$, which contradicts maximality.

\end{proof}

\subsection{Relations with $AN$-maps} \label{nonSplitAN}

Recall that if $A \subset G$ is a maximal $\R-$split torus then diagonalizing the action of $A$ on the Lie algebra of $G$ gives the collection of eigenvalue functions $\lambda_i: A \to \R$, which are the restricted roots.    If we choose a hyperplane in $A$  to divide the roots into positive and negative, the $N$ in the $KAN$ decomposition is  the span of the root spaces  $E_{\lambda} \subset G$ with $\lambda$ positive.     Hence, by equivariance,  an $AN$-map from $A_1N_1$ into $A_2N_2$ gives an isomorphism $A_1$ to $A_2$ which sends positive roots to positive roots. In particular, it gives a pattern embedding.

A basic Lie theory fact is that if $r$, $r'$, and $r+r'$  are roots the letting $E_*$ be the corresponding root spaces, one has $[E_r, E_{r'}] = E_{r+r'}$.   The non-trivial part here is that the bracket is surjective.    In the $\R$-split case (or the complex case) this says that any non-zero vectors in $E_r$ and $E_{r'}$ have non-zero bracket.   For an $AN$-map, it follows that the image of the positive roots is {\bf closed}, meaning that if $r$, $r'$ are roots in the domain then $r+r'$ is a root in the range iff it is in the domain.    This puts strong limitations on the kinds of non-conformality that can occur.

To make this precise, and extend to the non-split case, we focus on the two kinds of basic non-conformality : a $D_2$ mapping into an $A_2$ or non-conformally into a $BC_2$,  and an $A_2$ into a $BC_2$.

\begin{lemma}  Let $f$ be an $AN$-map from $A_1N_1$ into $A_2N_2$.    If $r$ and $r'$ are positive  roots in the domain which generate a $D_2$ and which map to non-orthogonal roots $s$ and $s'$ then $s+s'$ is not a root. \end{lemma}

\begin{lemma}  Let $f$ be an $AN$-map from $A_1N_1$ into $A_2N_2$.    Suppose $P$ is a special plane for the domain of type $A_2$ containing positive roots $r$, $r'$, and $r+r'$ and which map to roots $s$, $s'$ and $s+s'$ in a plane of type $BC_2$.    Neither $s+2s'$ nor $s'+2s$ is a root.\end{lemma}

In both cases the conclusion is a restriction which roots in the image plane are positive, constraining them to lie in a single quadrant.     Note that this is the same as saying the image is closed without any assumptions about the sizes of the root spaces (in particular, nothing is assumed $\R$-split).

We need a basic fact from the theory of semi-simple Lie algebras (which is also responsible for the fact quoted above in the split case) : given a root space $E_r$ and $0 \neq v \in E_r$ there is a vector $u \in E_{-r}$ so that $[v,u] = H_r$  where $H_r$ is the element of the maximal torus so that for any root space $E_t$ and any $w \in E_t$,  $[H_r , w]  = (r\cdot t) w$.

\begin{proof}

For the first lemma, let $v$ and $v'$ be  non-zero vectors in the root space $E_s$ and $E_{s'}$ which are in the image of $f$ and let $u$ be as above relative to $v$.    Since $r+r'$ is not a root, the images of $E_r$ and $E_{r'}$ commute, in particular $[v,v']=0$.    Then, by the Jacobi identity:

$$ 0 = [[v,v'],u] = [[v,u],v'] + [v,[v',u]] = [H_s,v'] + [v, [v',u]] = (s \cdot s') v' + [v,[v',u]]$$

By assumption, $s\cdot s' \neq 0$, so $[v,[v',u]] \neq 0$, which implies that $[v',u] \neq 0$ so $ s'-s$ must be a root.   Thus if $s +s'$ is a root, there is a string of roots $(s'-s), (s'-s) + s, (s'-s) + 2s$.   This happens only in $BC_2$ and only if $s$ and $s'$ are orthogonal.

The second lemma is similar.

Let $z$ and $z'$ be elements of the root spaces $E_r$ and $E_{r'}$ with $z''= [z,z'] \in E_{r+r'}$ non-zero ( possible as $[E_r,E_{r'}]=E_{r+r'}$ ).     Let $v'$, $v'$, and $v'' = [v,v']$ be their images in the root spaces $E_{s}$, $E_{s'}$, and $E_{s+s'}$ respectively.

 Suppose $2s+s'$ is a root in the range (the other case is symmetric).    This gives a root string of length three in
 $BC_2$ : $s'$, $s+s'$, $2s+s'$.    As this is the maximal length of a root string, neither $s'-s$ nor $s'+3s$ is a
 root in the range ( and $s$ and $s+s'$ must be orthogonal).    Since $r'+2r$ is not a root in the domain ( the root
 system $A_2$ has only length two root strings), we have $[v'', v]=0$.    Choose a $u \in E_{-s}$ in the range as
 before.     We have, by the Jacobi identity:

$$0 = [ 0,u] = [[v'', v],u] = [[v'',u], v] + [v'',[v,u]] = [[v'',u],v] - ((s+s') \cdot s) v = [[v'',u],v]$$

The Jacobi identity says $0=[v'',u] = [[v,v'],u] = [[v,u].v'] + [v,[v',u]]$.   The second term on the right is zero as $s'-s$ is not a root, and the first is $(s\dot s') v'$.   Thus $s \cdot s' = 0$ which is a contradiction as $ s \cdot (s+s') =0 $.

\end{proof}

We now turn to discussing  the prior pattern embeddings in this context.    Since an $AN$-map must send roots to roots, the two pattern embeddings of $A_n$ into $BC_n$ therefore could come from $AN$-maps into $B_n$ or $C_n$ respectively (or either into $BC_n$ ).

For the first map,  the $A_2$-planes that get sent to $BC_2$ planes are the planes $P_{ij}$ spanned by $x_i-x_0$, $x_j-x_0$, and $x_i - x_j$, which map to $y_i$, $y_j$, and $y_i - y_i$ inside the plane spanned by $y_i$ and $y_j$.  The "missing" roots in the image are $y_i + y_j$ and $-y_i - y_j$.   Thus the lemma above says that $y_i$ and $y_j$ must have opposite signs.  For this to be true for all $i$ and $j$ implies that $n=2$.   Thus this map can arise from an $AN$-map only for $A_2$ into $BC_2$ (in which case this map is the same as the other pattern embedding).

For the second map, the $A_2$ planes mapping to $BC_2$ planes are the planes $P_{i}$ spanned by $x_i-x_0$, $x_i-x_1$, and $x_1 - x_0$, which map to $y_i + y_1$, $y_i - y_1$, and $2y_1$ inside the plane spanned by $y_i$ and $y_1$.    The lemma then says $y_i-y_1$ and $y_i + y_1$ must be the same sign for all $i$.    This happens, for example, if $y_n > \cdots > y_1 >0$.   This map does indeed arise an an $AN$-map (with this order), as described previously.

In the case of the embedding of $D_4$ into $F_4$, it does send roots to roots.    However the image is not closed under addition for any ordering, so there is no $AN$-map.

For the reducible case, an $AN$-map for $A_n \times A_m \to A_{n+m}$ was constructed explicitly already, and the $A_n \times D_m \to D_{n+m}$, $A_n \times BC_m \to BC_{n+m}$, etc. behave similarly.    In all of these cases one can take the Dynkin diagram for the target system and remove one edge so that the two components are the diagrams for the two factors of the domain.    If one orders the roots on the factors so that one is positive and one is negative, additivity is always satisfied.

The one case that does not fit this pattern is $A_n \times A_m \to D_{n+m}$.     If we call the roots in the $A_n$ by $x_i - x_j$ for $n \geq i > j \geq 0$, for $A_m$ by $z_i - z_j$ for $m \geq i > j \geq 0$, and for $D_{n+m}$ by $ y_i \pm y_j $ for $ n+m1 > i > j \geq 0$ then the mapped described sends:

\begin{itemize} \item $x_i - x_j$  to $y_i - y_j$ \item $z_i - z_j$  to $y_{i+n-1} - y_{j+n-1}$ for $i>j>1$ \item $z_i - z_1$ to $y_{i+n-1} - y_1$ for $i>1$ \item $z_i - z_0$ to $y_{i+n-1} + y_0$ for $i>1$ \item $z_1 - z_0$ to $y_1 + y_0$ \end{itemize}

Since the individual factors map conformally, the only conditions from the lemmas are the positive roots from one factor must have non-negative inner-product with those from the other.   From this we conclude:

\begin{itemize} \item $x_i - x_1$ and $z_j - z_1$ have the same sign for all $i$ and $j$ at least $2$ \item $x_i - x_0$ and $z_j - z_0$ have opposite signs for all $i$ and $j$ at least $2$ \item $x_1 - x_0$ has opposite signs to both $z_i - z_0$ and $z_i - z_1$ for $i>1$ \item $z_1 - z_0$ has opposite signs to both $x_i - x_0$ and $x_i - x_1$ for $i>1$ \end{itemize}

This is not possible as $x_2 - x_1$ has the same sign as $z_2 - z_1$, which is opposite to $x_1 - x_0$ and therefore the same as $z_2 - z_0$ hence opposite to $x_2 - x_0$.   So $x_2 - x_1$  and $x_2 - x_0$ have opposite signs, which means it is impossible for both to be opposite to $z_1 - z_0$.

Finally we observe that the closed condition is in general sufficient for the existence of an $AN$-map when the domain is split, so it is exactly these pattern embeddings that arise from $AN$-maps.    Since the subset if closed, it is immediate that the span of the image root spaces gives a subgroup of $N_2$ which is graded exactly as $N_1$ is.      Thus the content of the claim is that this implies the existence of an $AN$-map.    To see this, let $AN$ and $AN'$ correspond to the $\R$-split groups for the given root systems.   Let $\Delta$ be the set of simple roots in the domain.    For each $s \in \Delta$ choose an arbitrary isomorphism $E_s \to E'_s$  between the root spaces in domain and range.      The rest is determined inductively.    Suppose we already have an isomorpshism $E_r \to E'_r$ defined, and $s \in \Delta$ with $r+s$ a root (note that the closed condition says this happens in the domain iff it does in the range, so there is no ambiguity).     Then $[E_r,E_s] = E_{r+s}$ and $[E'_r,E'_s] = E'_{r+s}$, so the map on $E_{r+s}$ is determined.       We need to see it is well-defined.   If $r$ is a positive root then  we need to see that all ways of writing $r=s_1 + s_2 + \cdots s_n$ with all partial sums roots give the same map $E_r \to E'_r$.    To avoid too much Lie theory, we prove this only for domain $A_n$, which is sufficient for all the maps constructed here.

\begin{lemma} \label{typeA}
 Let $s$ and $t$ be simple roots whose sum is not a root.    Suppose $r$ is a positive root such that $r + s$, $r+t$,
 and $r+s+t$ are all roots.     Then the maps $(E_r \otimes E_s) \otimes E_t \to E_{r+s+t}$ and $(E_r \otimes E_t)
 \otimes E_s \to E_{r+s+t}$ are equal. \end{lemma}

\begin{proof}

This is essentially just the Jacobi identity.  Let $u_*$ be vectors in the relevant root spaces.  Then:

$$[u_r, u_s], u_t ] = [[u_r,u_t], u_s] + [u_r,[u_s,u_t]]$$

The second term on the right vanishes as $E_s$ and $E_t$ are assumed to commute.

\end{proof}

Thus we can freely permute adjacent terms provided the simple roots commute.     If we express $x_i - x_j$ as above as a sum of simple roots in $A_n$ (namely $x_{s+1}-x_s$ for varying $s$) then we have the partial sums $x_{i_k} - x_{j_k}$ where at each step either $i_k$ decreases by one or $j_k$ increases by one.    The simple roots of those two types always commute, so the lemma allows us to move any one sum to any other.

\section{Further Questions}\label{Open}

We collect here some further questions concerning quasi-isometric embeddings of symmetric spaces left open by our work and which we believe would be a useful guide to further research.

\begin{itemize}

\item Can one classify the quasi-isometric embeddings of $Sl_3(\R)$ into $Sp_4(\R)$?   It seems unlikely that the
    $AN$-maps are the only ones, but we do not know how to construct others.   Similarly, can one rule out
    quasi-isometric embeddings for spaces of $A_n$ to $C_n$ type where no $AN$-maps exist?   Can one rule out
    embeddings that realize the other linear pattern embedding from lemma \ref{weird} in section \ref{patterns}?
    Might it be that the existence of a quasi-isometric embedding forces (maybe via a limiting argument) the
    existence of an $AN$-map?

\item When do quasi-isometric embeddings exist when rank increases?   As a start, if $X$ quasi-isometrically embeds
    in $Y \times \R^d$ for some $d$, does that imply $X$ quasi-isometrically embeds in $Y$?   More ambitiously, is
    there any sense in which one can describe all the quasi-isometric embeddings when rank increases?  Again, as a
    start, are all quasi-isometric embeddings of $X$ in $X \times \R$ graphs of Lipschitz functions?

\item What can one say about uniformly proper embeddings?   These give perhaps a more natural geometric analogue of
    Margulis' superrigidity.     As a warning, recall that the horospheres give a uniformly proper embedding of
    $\R^n$ in $\hyp^{n+1}$, so rank is not well behaved.

\end{itemize}

\medskip \noindent David Fisher\\ Department of Mathematics\\ Indiana University\\ Bloomington, IN 47401\\ E-mail: fisherdm@indiana.edu

\medskip \noindent Kevin Whyte\\ Department of Mathematics\\ University of Illinois at Chicago\\ Chicago, Il 60607\\ E-mail: kwhyte@math.uic.edu

\end{document}